\newfont{\gl}{eufm10 scaled \magstep1} 
\newcommand{\dd}{\hbox{d}}
\newcommand {\un}{\underline}
\newcommand {\cN}{\mathcal{N}}
\newcommand {\cM}{\mathcal{M}}
\newcommand {\cB}{\mathcal{B}}
\newcommand {\cI}{\mathcal{I}}
\newcommand {\cD}{\mathcal{D}}
\newcommand{\RR}{\ensuremath{\mathbb R}}
\newcommand{\CC}{\ensuremath{\mathbb C}}
\newcommand{\g}{\ensuremath{\frak{g}}}
\newcommand{\h}{\ensuremath{\frak{h}}}
\newcommand{\uu}{\ensuremath{\frak{u}}}
\newcommand{\cL}{\mathcal{L}}
\newcommand{\cO}{\mathcal{O}}
\numberwithin{equation}{section}
\newtheorem{theorem}{Theorem}[section]
\newtheorem{proposition}{Proposition}[section]
\newtheorem{lemma}{Lemma}[section]
\theoremstyle{remark}
\newtheorem{example}{Example}[section]
\theoremstyle{remark}
\newtheorem{remark}{Remark}[section]
\theoremstyle{definition}
\newtheorem{definition}{Definition}[section]
\begin{document}

\title{{An extension of the Marsden-Ratiu reduction for Poisson manifolds}}

\author{Fernando Falceto}
\address{Departamento de F\'{\i}sica Te\'orica and
Instituto de Biocomputaci\'on y F\'{\i}sica de Sistemas Complejos,
Universidad de Zaragoza,
E-50009 Zaragoza (Spain)}
\email{falceto@unizar.es}

\author{Marco Zambon}
\address{Centre de Recerca Matematica, Apartat de correus 50, 08193 Bellaterra
(Spain)}
\email{mzambon@crm.cat}

\thanks{ {\it Keywords:} Poisson manifolds. Reduction.\\
{\it 2000 MSC:} 53D17, 53D20, 53D99.\\}

\begin{abstract}
\vskip 0.3cm
\noindent
We propose a generalization of the reduction of Poisson manifolds by 
distributions introduced by Marsden and Ratiu.
Our proposal overcomes some of the restrictions of the original procedure,
and  makes the reduced Poisson structure 
effectively dependent on the distribution. 
Different applications are discussed, as well as the algebraic 
interpretation of the procedure and its formulation in terms
of Dirac structures.
\end{abstract}
\maketitle

\section{Introduction}

Symplectic manifolds model phase spaces of physical systems, and
their theory of reduction   is a classical subject. A case in which
reduction occurs naturally is when a Lie group $G$ acts on a
symplectic manifold $(M,\omega)$ with equivariant moment map
$J:M\rightarrow \g^*$: under regularity assumptions the
Marsden-Weinstein theorem  states that the quotients
$J^{-1}(\mu)/G_{\mu}$ inherit a symplectic form. Another case is
given by submanifolds $C\subset M$ such that $TC^{\omega}\subset TC$
(coisotropic submanifolds), for in that case the quotient $C/
TC^{\omega}$, when smooth, inherits a symplectic form. The theory of
reduction  extends naturally to  Poisson manifolds, which encode
phase spaces of physical systems with symmetry.
The hamiltonian reduction and coisotropic reduction mentioned above
extend in a straightforward way to Poisson manifolds. Further,  both
are recovered as special cases of a reduction theorem stated in 1986
by Marsden and Ratiu  \cite{MarRat}.

The starting data of the Marsden-Ratiu theorem is a pair $(N,B)$
where $N$ is a submanifold  of the Poisson manifold $(M,\Pi)$ and
$B$  a subbundle of $T_NM$, the restriction of $TM$ to $N$. The
role of $B$ is to prescribe how
 to extend certain functions on $N$ to functions on the whole of
$M$, and is needed because the Poisson bracket of $M$ is defined
only for elements of $C^{\infty}(M)$. The conclusion of the theorem
is that, when the assumptions are met, the quotient $N/(B\cap TN)$
inherits a Poisson bracket from the one on  $M$.\\

The aim of this paper is two-fold.  First  we argue that the
assumptions of the Marsden-Ratiu theorem are too strong, in the
sense that the theorem allows to recover only  Poisson structures (on
quotients of $N$) which lose most of the information encoded by the
subbundle $B$. See Thm. \ref{MR2} -- our rephrasing of the original Marsden-Ratiu theorem -- and Prop. \ref{pr:unique}.

Then we set weaker assumptions on the pair $(N,B)$ which  ensure the
existence of  a Poisson structure on $N/(B\cap TN)$   encoding the
subbundle $B$. The main difficulty consists in ensuring that the
bracket of functions on the quotient satisfies the Jacobi identity.
 In
Prop. \ref{onn} we set assumptions  similar in spirit to those of
\cite{MarRat}, whereas in Prop. \ref{thetaD} the assumptions involve
an additional piece of data, namely a foliation on $M$. We apply
these results to the symplectic setting (with and without moment
map) as well.

The paper is organized as follows:
in the next section we review the original reduction of Marsden and Ratiu.
In section 3 we present the most general form of the extension 
that we propose, while section 4 is devoted to the application of the previous
results to some special situations and examples.
We collect in the appendix some complementary results,
like the algebraic interpretation of our reduction,
its description in term of Dirac structures and other auxiliary
material necessary for the main body of the paper. 

We finish remarking that an
extension of the Marsden-Ratiu reduction 
using supergeometry is
being worked out in \cite{CZsup}.\\

\noindent{\bf Acknowledgments:} We thank  J.P. Ortega  for pointing
out a necessary assumption in Prop. \ref{thetaD}.
 Research partially
supported by grants FPA2003-02948 and FPA2006-02315, MEC and
SB2006-0141(Spain) as well as SNF grant 20-113439 (Switzerland).

 \section{Marsden-Ratiu reduction}\label{MRpaper}

We start by recalling the Poisson reduction by distributions as it was stated
by Marsden and Ratiu in \cite{MarRat}, see also \cite{OrtRat}.
The set-up we consider here and in the rest of the paper is the following:
\begin{center}
\fbox{
\parbox[c]{12.6cm}{\begin{center}
$(M,\{\cdot,\cdot\})$ is a Poisson manifold\\
$N$ is a submanifold with embedding $\iota: N\hookrightarrow M$\\
${B}\subset T_N M$ is  a smooth
subbundle of $TM$ restricted to $N$.\\
${{F}}:={B}\cap TN$ is an integrable regular distribution on $N$.
\end{center}
}}
 \end{center}

In this section we shall also assume that $\un{N}:=N/F$ is a smooth manifold so that the projection map is a submersion.
Since we are concerned with  functions on $N$, instead of working with functions on the whole of $M$, we work on  a suitable tubular neighborhood  $M'$ of $N$ in $M$.  
 
\begin{lemma}\label{surj}
There exists an open subset    $M'$ of $M$ containing $N$ with the property that  
any $F$-invariant function on $N$ can be extended to a $B$-invariant function on $M'$. 
\end{lemma}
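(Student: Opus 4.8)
The plan is to realize the $B$-invariant extension as the pullback of a function on $\un N$ along a submersion that extends the leaf projection $\pi\colon N\to\un N$ off $N$. In this way invariance will hold genuinely at every point of $M'$, as it will be invariance along an integral foliation, rather than merely the first-order condition $dg_p(B_p)=0$ along $N$.

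First I would extend the submersion $\pi$ to a submersion $P\colon M'\to\un N$ on a neighborhood $M'$ of $N$, with $P|_N=\pi$. Concretely, embed $\un N$ in some $\RR^L$, extend the components of $N\to\un N\hookrightarrow\RR^L$ to $M$ (shrinking to a tubular neighborhood of $\un N$ in $\RR^L$ and composing with its retraction), to obtain a smooth $P\colon M'\to\un N$ restricting to $\pi$ on $N$. Because $dP_p|_{T_pN}=d\pi_p$ is already onto $T_{\pi(p)}\un N$ for $p\in N$, the map $P$ is submersive at every point of $N$ \emph{whatever} its derivatives in the normal directions are; hence, after shrinking $M'$, $P$ is a submersion throughout $M'$.

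Next I would use the remaining freedom in $P$, namely its normal $1$-jet along $N$, to arrange that $\ker dP$ contains $B$ at points of $N$. Since $\ker d\pi_p=F_p=B_p\cap T_pN$, the covectors $dP_p$ already annihilate $F_p$; prescribing the normal derivatives of $P$ along $N$ (which is compatible with submersivity by the previous paragraph) I can force $dP_p$ to annihilate the transverse part $B_p/F_p$ as well, so that $B_p\subseteq\ker dP_p$ for all $p\in N$. Writing $\hat B:=\ker dP$, this is an integrable distribution on $M'$, the tangent distribution to the fibres of $P$, which extends $B$ in the sense that $B\subseteq\hat B|_N$, and each fibre of $P$ meets $N$ exactly in a leaf of $F$ (here I shrink $M'$ so that $P^{-1}(\bar x)\cap N=\pi^{-1}(\bar x)$ is a single $F$-leaf).

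Finally, given an $F$-invariant function on $N$, write it as $\bar f\circ\pi$ for a smooth $\bar f$ on $\un N$ (which exists since $\pi$ is a surjective submersion with fibres the $F$-leaves), and set $g:=\bar f\circ P\in C^\infty(M')$. Then $g$ is constant along the fibres of $P$, so $dg$ annihilates $\hat B$, and in particular $B$, at \emph{every} point of $M'$; thus $g$ is genuinely $B$-invariant on all of $M'$. Moreover $g|_N=\bar f\circ\pi$ is the given function, so the extension property holds. I expect the main obstacle to be the middle step: globally extending $\pi$ to a neighborhood submersion whose fibres are tangent to $B$ along $N$ while remaining submersive and simple. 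This is what forces the passage to a shrunk tubular neighborhood and the control of the normal $1$-jet of $P$ along $N$; the tangential surjectivity of $d\pi$ is exactly what makes this control compatible with $P$ being a submersion.
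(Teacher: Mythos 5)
Your proof is correct under the standing assumption of this section that $\un{N}=N/F$ is smooth, but it takes a genuinely different (and longer) route than the paper, which never passes through the quotient. The paper fixes a Riemannian metric, forms the adapted normal bundle $\tilde{B}\oplus (TN+B)^{\perp}$ (with $\tilde{B}$ a complement of $F$ in $B$), uses the exponential map to obtain a tubular neighborhood projection $p\colon M'\to N$, and extends $f$ directly as $p^*f$: along $N$ the differential of $p^*f$ kills $F$ (because $f$ is $F$-invariant) and $\tilde{B}$ (because $p$ collapses those directions), hence kills $B=F\oplus\tilde{B}$ --- and that is \emph{all} the definition of $C^{\infty}(M')_B$ demands, since $\dd f|_B$ is a condition imposed only at points of $N$. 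Your argument deliberately proves more, namely invariance along an honest integrable distribution $\hat{B}=\Ker \dd P$ on all of $M'$, but this extra strength is unneeded here and is bought at two costs. First, you rely on the smoothness of $\un{N}$ to write $f=\bar{f}\circ\pi$; that is legitimate in this section, but the paper re-invokes this lemma in Section \ref{extMR}, where $\un{N}$ is explicitly \emph{not} assumed smooth, and there only the direct extension $p^*f$ survives. Second, your middle step --- prescribing the normal $1$-jet of $P$ along $N$ so that $B\subseteq\Ker \dd P$ --- is asserted rather than constructed; the cleanest way to realize it is precisely the paper's adapted tubular neighborhood, setting $P:=\pi\circ p$, at which point $\bar{f}\circ P=\bar{f}\circ\pi\circ p=p^*f$ and your extension coincides with the paper's, so the detour through $\un{N}$ is dispensable. (A minor remark: your shrinking of $M'$ to arrange $P^{-1}(\bar{x})\cap N=\pi^{-1}(\bar{x})$ is superfluous --- since $P|_N=\pi$, that identity holds automatically.)
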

\begin{proof} Fix a Riemannian metric on $M$.
Consider the normal\footnote{Here ``normal'' is taken to mean a  complement to $TN$ 
in $T_NM$, not necessarily the orthogonal bundle to $TN$ w.r.t. the Riemannian metric.}
bundle to $N\subset M$
given by $\pi: \tilde{B}\oplus (TN+B)^{\perp}  \rightarrow N$, where $\tilde{B}$ is the orthogonal complement to $F=B\cap TN$ in $B$, and $(TN+B)^{\perp}$ denotes the orthogonal complement to $TN+B=TN\oplus \tilde{B}$ in the vector bundle $T_NM$.
The exponential map associated to the
Riemannian metric on $M$ identifies a neighborhood of the zero section of this normal bundle
with a tubular neighborhood $M'$ of $N$ in $M$ \cite{MS}\cite{Lee}.  If $f\in C^{\infty}(N)_F$ then
$\pi^*f$ is an extension lying in  $C^{\infty}(M')_B$, for its differential at points of $N$ annihilates both $F$ and $\tilde{B}$.
\end{proof}


The concrete choice of tubular neighborhood $M'$ is immaterial, because taking Poisson brackets is a local operation. To keep the notation simple, in the sequel we will assume that $M$ itself satisfies the above extension property, i.e. we will assume $M'=M$.

\begin{definition}\cite{MarRat}
The subbundle $B\subset T_N M$ is called
{\bf canonical}
if for any elements $f_1, f_2$ of
$C^\infty(M)_{B} \equiv\{f\in C^\infty(M)\mid
\dd f\vert_{B}=0\}$ we have
$\{f_1,f_2\}\in C^\infty(M)_{B}$.
\end{definition}

In other words, $B$ is canonical if  the Poisson bracket of
${B}$-invariant functions is ${B}$-invariant.
Note that in the previous definition,
$\dd f\vert_{B}$ stands for the restriction (not pullback) of $\dd f$ to
$N$ and then to sections of ${B}$.
 
In the next definition we consider $C^\infty(N)_{{F}} \equiv\{f\in C^\infty(N)\mid
\dd f\vert_{F}=0\}$, which is naturally isomorphic to $C^\infty(\un{N})$.
\begin{definition}\label{def:reducible}\cite{MarRat}
 $(M,\{\cdot,\cdot\}, {N}, B)$ is {\bf Poisson reducible}
if there is a Poisson bracket $\{\cdot,\cdot\}_{\un{N}}$ on $\un{N}$
such that for any
$f_1, f_2\in  C^\infty(\un{N})\cong  C^\infty(N)_{{F}}$ we have:
$$\{f_1,f_2\}_{\un{N}}=\iota^*\{f_1^B,f_2^B\}$$
for all extensions $f_i^B\in C^\infty(M)_{B}$ of $f_i$.
\end{definition}

With the previous definitions we can state the Marsden-Ratiu reduction theorem.


\begin{theorem}\label{th:MarRat}
(Marsden-Ratiu \cite{MarRat}) Assume that ${B}\subset T_NM$ is a
canonical subbundle. Then $(M,\{\cdot,\cdot\}, {N}, B)$  is Poisson
reducible if and only if
$$\sharp {B}^{\circ}\subset TN+{B}.$$

\end{theorem}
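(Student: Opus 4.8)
The plan is to exploit that, with $B$ canonical, the proposed formula $\{f_1,f_2\}_{\un N}=\iota^*\{f_1^B,f_2^B\}$ fails to define a Poisson bracket on $\un N$ only through a possible dependence on the chosen extensions, and to show that this independence is exactly governed by $\sharp B^\circ\subset TN+B$. Throughout, $\sharp\colon T^*M\to TM$ denotes the bundle map of the Poisson tensor, normalized so that $\{h,g\}=\langle \dd g,\sharp\,\dd h\rangle=-\langle \dd h,\sharp\,\dd g\rangle$ (the sign being immaterial for the vanishing argument below).

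I would first clear away the structural points, all of which use that $B$ is canonical. Since $\{f_1^B,f_2^B\}\in C^\infty(M)_B$, its restriction to $N$ annihilates $F=B\cap TN$ and hence descends to $\un N$, so the right-hand side is a genuine element of $C^\infty(\un N)\cong C^\infty(N)_F$. Assuming independence of extensions, bilinearity and antisymmetry are immediate; the Leibniz rule holds because $f_1^Bf_2^B$ is a $B$-invariant extension of $f_1f_2$; and the Jacobi identity on $\un N$ follows from that on $M$ once one observes that $\{f_2^B,f_3^B\}$ is itself an admissible $B$-invariant extension of $\{f_2,f_3\}_{\un N}$. Thus, under the canonical hypothesis, reducibility is equivalent to well-definedness of the formula.

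By bilinearity, well-definedness amounts to the vanishing
\[
\iota^*\{h,g\}=0\qquad\text{whenever }h,g\in C^\infty(M)_B\text{ and }h|_N=0,
\]
where $h=f_1^B-\tilde f_1^B$ is a difference of two extensions of the same function (taking $f_1=0$ shows that $h$ ranges over all $B$-invariant functions vanishing on $N$, while $g$ ranges over all of $C^\infty(M)_B$). The heart of the matter is the pointwise reading of this identity. At $p\in N$ the conditions $h|_N=0$ and $\dd h|_B=0$ give $\dd h_p\in(T_pN+B_p)^\circ$, while $\dd g|_B=0$ gives $\sharp\,\dd g_p\in\sharp B_p^\circ$, and $\{h,g\}(p)=-\langle \dd h_p,\sharp\,\dd g_p\rangle$. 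If $\sharp B^\circ\subset TN+B$ then $\sharp\,\dd g_p\in T_pN+B_p$ is annihilated by $\dd h_p$, so the bracket vanishes on $N$; this gives the ``if'' direction.

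For the converse I would run this computation in reverse, which forces the main technical step: showing that at each $p\in N$ the covectors $\dd g_p$ of $B$-invariant functions exhaust $B_p^\circ$, and that the covectors $\dd h_p$ of $B$-invariant functions vanishing on $N$ exhaust $(T_pN+B_p)^\circ$. I expect this surjectivity to be the delicate point, since it is where the tubular-neighborhood model of Lemma \ref{surj} enters an otherwise linear-algebraic argument: using $T_NM=TN\oplus\tilde B\oplus(TN+B)^\perp$, a prescribed covector is realized by adding to a pullback $\pi^*f$ (with $f\in C^\infty(N)_F$ realizing the $T_pN$-part) a function linear in the $(TN+B)^\perp$ fiber coordinates, both of which are $B$-invariant. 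Granting this, reducibility yields $\langle\xi,\sharp\,\eta\rangle=0$ for all $\xi\in(T_pN+B_p)^\circ$ and $\eta\in B_p^\circ$; fixing $\eta$, the vector $\sharp\,\eta$ is killed by the whole of $(T_pN+B_p)^\circ$, so by the double-annihilator identity $\sharp\,\eta\in T_pN+B_p$. As $p$ and $\eta$ vary this is precisely $\sharp B^\circ\subset TN+B$.
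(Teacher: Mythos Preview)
The paper does not give its own proof of this theorem; immediately after the statement it writes ``The proof of the theorem can be found in \cite{MarRat} and \cite{OrtRat}.'' So there is nothing in the paper to compare your argument against directly.

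That said, your proof is correct and is essentially the standard one. The one place where the paper's later material touches your argument is the ``if'' direction of well-definedness: in the proof of Thm.~\ref{gen} the paper carries out exactly your pointwise computation (differentials of differences of extensions lie in $(TN+B)^\circ$, and $\sharp(TN+B)^\circ\subset B$ kills the pairing), so on that half you and the paper agree verbatim. Your handling of the structural points (closure in $C^\infty(N)_F$, Leibniz, Jacobi) via canonicity is routine and correct. For the ``only if'' direction, your reduction to the surjectivity of $g\mapsto \dd g_p$ onto $B_p^\circ$ and of $h\mapsto \dd h_p$ onto $(T_pN+B_p)^\circ$, and your use of the normal-bundle model of Lemma~\ref{surj} to realize these covectors by $\pi^*f$ plus a fiber-linear function in the $(TN+B)^\perp$ directions, is the right mechanism and goes through as written.
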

In the above theorem
$\sharp: T^*M \rightarrow TM$ denotes
the contraction with the Poisson bivector on $M$, and  
${B}^{\circ}={\rm Ann}({B})$ consists of elements of
$T_N^*M$ that kill all vectors in ${B}$.
The proof of the theorem can be found in \cite{MarRat} and
\cite{OrtRat}.\\

 In the rest of this section we shall discuss the
implications of the  assumptions of the Marsden-Ratiu theorem.

The main observation is
that the assumption made in Theorem \ref{th:MarRat}
that ${B}$ is canonical
 is a rather strong requirement.

\begin{lemma}\label{condonMR}
Assume that ${B}\subset T_NM$ is a canonical subbundle.
Then either $\sharp{B}^{\circ}\subset TN$ or ${B}=0$.
\end{lemma}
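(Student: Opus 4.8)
The plan is to prove the statement in the equivalent form: \emph{if $B$ is canonical and $B\neq 0$, then $\sharp B^{\circ}\subset TN$}. The case $B=0$ is precisely the one in which canonicity is an empty hypothesis (then $C^{\infty}(M)_B=C^{\infty}(M)$, so the bracket is automatically $B$-invariant) and for which the inclusion may genuinely fail; this is why it appears as the exceptional alternative. Since $\sharp B^{\circ}\subset TN$ is a pointwise condition, I would fix $p\in N$ with $B_p\neq 0$ and work near $p$ in coordinates adapted to the splitting $T_NM=TN\oplus\tilde{B}\oplus(TN+B)^{\perp}$ from the proof of Lemma \ref{surj}, where $B=F\oplus\tilde{B}$. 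Straightening $F$ inside $N$ and using an adapted tubular neighbourhood, I choose coordinates $(x,y,z,w)$ with $N=\{z=w=0\}$, $F=\langle\partial_{x}\rangle$, $TN=\langle\partial_{x},\partial_{y}\rangle$, $\tilde{B}=\langle\partial_{z}\rangle$ and $(TN+B)^{\perp}=\langle\partial_{w}\rangle$ \emph{along} $N$, so that $B=\langle\partial_{x},\partial_{z}\rangle$ there. Then $B^{\circ}=\langle dy,dw\rangle$, the algebra $C^{\infty}(M)_B$ consists of the $f$ with $\partial_{x^i}f=\partial_{z^j}f=0$ on $N$, and $\sharp B_p^{\circ}\subset T_pN$ is equivalent to the vanishing at $p$ of the four families of bracket components $\{y^k,z^j\}$, $\{y^k,w^l\}$, $\{w^l,z^j\}$, $\{w^l,w^{l'}\}$.

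The key step is to probe these components with well-chosen \emph{global} $B$-invariant functions. I would use the products $x^{i}w^{l}$, $z^{j}z^{j'}$ and $z^{j}w^{l}$, paired with functions realizing $dy^k(p)$ and $dw^l(p)$. Two features make this work cleanly. First, each of these products vanishes identically on $N$, so multiplying by a cut-off supported near $p$ yields a genuine element of $C^{\infty}(M)_B$ with unchanged $2$-jet at $p$ (its differential along $B$ still vanishes on $N$); similarly $\chi\, w^l$ is globally $B$-invariant and $\pi^{*}(y^k|_N)$ from Lemma \ref{surj} realizes $dy^k(p)$. Second, each product has vanishing differential at $p$. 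Now for $v\in B_p$ (extended to a vector field) and $f,g\in C^{\infty}(M)_B$ one has $v\{f,g\}=(\mathcal{L}_v\Pi)(df,dg)+\Pi(d(vf),dg)+\Pi(df,d(vg))$, and evaluating at $p$ when $df(p)=0$ kills the first and third terms and makes the surviving term extension-independent, leaving only the value $\Pi(p)$ paired with a second derivative of $f$. Canonicity forces $v\{f,g\}(p)=0$ for every $v\in B_p$, so a short computation gives, for example with $f=z^{j_0}z^{j}$, $dg(p)=dy^k(p)$ and $v=\partial_{z^{j_0}}$, exactly $\{z^j,y^k\}(p)=0$; the analogous choices of the second factor ($w^l$ in place of $z^j$, or $g$ with $dg(p)=dw^l(p)$) produce the remaining components.

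Finally I would organise this into the stated dichotomy. Since $B_p\neq 0$, either $\tilde B_p\neq 0$ or $F_p\neq 0$. If $\tilde B_p\neq 0$ there is a direction $\partial_{z^{j_0}}\in B_p$, and the choices above based on $v=\partial_{z^{j_0}}$ produce all four vanishings. If $\tilde B_p=0$ then $B_p=F_p\neq 0$, there are no $z$-directions at all, and the only components to kill are $\{y^k,w^l\}(p)$ and $\{w^l,w^{l'}\}(p)$, which follow from $f=x^{i_0}w^l$ and $v=\partial_{x^{i_0}}$. In either case $\sharp B_p^{\circ}\subset T_pN$, and letting $p$ vary gives the inclusion of bundles.

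I expect the main obstacle to be guaranteeing an ample supply of \emph{global} $B$-invariant test functions with prescribed low-order jet at $p$, since canonicity is a hypothesis about global $B$-invariant functions. The resolution is exactly the observation above: the relevant products vanish on $N$, so cut-offs preserve $B$-invariance, and they have vanishing differential at $p$, so in the expansion of $v\{f,g\}$ only the pure value $\Pi(p)$ survives. This is what isolates the individual bracket components and lets canonicity do the work.
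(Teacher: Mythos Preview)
Your proof is correct and reaches the same conclusion, but the route differs from the paper's. The paper argues by contradiction and entirely coordinate-free: assuming $(\sharp B^\circ)_p \not\subset T_pN$ at some $p$, it picks $h\in C^\infty(M)_B$ and a constraint $g\in\cI$ with $\{g,h\}(p)\neq 0$, then notes that $g^2$, $g\cdot g'$ (for any constraint $g'$), and finally $f\cdot g$ (for arbitrary $f$) are all $B$-invariant; applying canonicity to these in turn forces first $B_p\subset T_pN$, then (via the constant-rank hypothesis on $F$) $B\subset TN$ globally, and finally $B_p=0$, hence $B=0$. Your approach is instead a direct verification in adapted coordinates: you feed explicit quadratic monomials into the Leibniz expansion $v\{f,g\}=(\cL_v\Pi)(df,dg)+\Pi(d(vf),dg)+\Pi(df,d(vg))$ and use $df(p)=0$ to isolate each suspect component of $\Pi(p)$. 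Both arguments rest on the same underlying observation---products involving constraints have vanishing differential along $N$ and are therefore automatically $B$-invariant---but the paper packages this into three short coordinate-free steps, while your version trades brevity for an explicit display of which components of $\Pi$ are being killed. One small point to tighten: in your second case ($\tilde B_p=0$) the choice $f=x^{i_0}w^l$ has $df_p=x^{i_0}(p)\,dw^l$, so you need $x^{i_0}(p)=0$; this is harmless since you may center the coordinates at $p$, but it should be said.
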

\begin{proof}
Assume that there is a point $p\in N$
s. t. $(\sharp{B}^{\circ})_p\not\subset T_pN$. Then there is
a $B$-invariant function $h\in C^\infty(M)_{B}$
and a constraint
$g\in \cI\equiv\{f\in C^\infty(M)\ \mbox{s. t.}\ f\vert_N=0\}$
that satisfies $\{g,h\}(p)\not=0$.
It is clear that $g^2$ is ${B}$-invariant, and  the canonicity of
$B$ implies that $\dd\{g^2,h\}\vert_{B}=0$.
In particular one must have $i_{v}(\dd g)_p \{g,h\}(p)=0$
and we then deduce that $i_{v}(\dd g)_p=0$ for any $v\in{B}_p$.

Consider now any other constraint $g'\in \cI $,
we again have that $g\cdot g'$ is ${B}$-invariant
and therefore $i_{v}(\dd g')_p \{g,h\}(p)=0$.
From this we deduce that $i_{v}(\dd g')_p=0$ for any constrain $g'$ and any
$v\in{B}_p$. This is equivalent to saying
$${B}_p\subset T_pN.$$

By the assumption of constant rank for ${B}\cap TN$
we must have ${B}\subset TN$ everywhere.
This   implies that $f\cdot g$
is $B$-invariant
for any $f\in C^\infty(M)$ and therefore
$i_{v}(\dd f)_p \{g,h\}(p)=0$ for any $v\in{B}_p$.
But this is possible only if ${B}_p=0$
which implies ${B}=0$ and the proof is complete.
\end{proof}

\begin{remark}
Consider the familiar situation in which $G$ is a compact Lie group 
acting freely on a symplectic manifold $(M,\omega)$ with equivariant 
moment map $J:M\rightarrow \g^*$.
Fix $\mu\in \g^*$, let $N=J^{-1}(\mu)$ and $B$ be given by the tangent 
spaces to the orbits of the $G$-action at points of $N$.
By Example B of \cite{MarRat} the subbundle $B$ is  canonical,  
and the Marsden-Ratiu theorem recovers the familiar symplectic 
structure on $J^{-1}(\mu)/G_{\mu}$.

Now    take $N$ as above but $B'\subset TN$ to be given by the
tangent spaces to the $G_{\mu}$-orbits  \emph{at points of $N$}, and
assume that $\mu$ is not a fixed point of the coadjoint action. Then
$B'$ is \emph{not} a canonical subbundle. This fact is consistent
with Lemma \ref{condonMR}, and is of course no contradiction to the
fact that the $G_{\mu}$-invariant functions on $M$ are closed under
the Poisson bracket (i.e. that the tangent spaces to the
$G_{\mu}$-orbits at \emph{all points of $M$} form a canonical
distribution).

\end{remark}

\begin{remark} If the subbundle $B \neq 0$ is canonical then it follows from Lemma \ref{condonMR}
 that $B^\circ \rightarrow N$ is a Lie subalgebroid of $T^*M$ (with the Lie algebroid structure induced by the Poisson structure on $M$).

\end{remark}

In view of Lemma \ref{condonMR}, Theorem \ref{th:MarRat} 
becomes:

%

\begin{theorem}\label{MR2}
Assume that ${B}\subset T_NM$ is a canonical subbundle.
\begin{itemize}
\item  If $B\not=0$ then  $(M,\{\cdot,\cdot\}, {N}, B)$  is Poisson reducible.
\item If $B=0$, then $(M,\{\cdot,\cdot\}, {N}, B)$  is Poisson reducible iff $N$ is a Poisson submanifold.
\end{itemize}
\end{theorem}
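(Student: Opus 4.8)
The plan is to obtain this statement as a direct consequence of Lemma \ref{condonMR} together with the original Marsden-Ratiu criterion of Theorem \ref{th:MarRat}. Under the standing hypothesis that $B$ is canonical, Theorem \ref{th:MarRat} reduces Poisson reducibility to the single inclusion $\sharp B^{\circ}\subset TN+B$, so both bullets amount to checking this inclusion in the two regimes $B\neq 0$ and $B=0$.

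First I would dispose of the case $B\neq 0$. Since $B$ is canonical and nonzero, the dichotomy in Lemma \ref{condonMR} forces $\sharp B^{\circ}\subset TN$. A fortiori $\sharp B^{\circ}\subset TN+B$, so the criterion of Theorem \ref{th:MarRat} is automatically met and $(M,\{\cdot,\cdot\},N,B)$ is Poisson reducible. No further computation is required.

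Next I would treat the case $B=0$. One first notes that $B=0$ is always canonical, since $C^{\infty}(M)_{0}=C^{\infty}(M)$ and the Poisson bracket of two functions is again a function; hence Theorem \ref{th:MarRat} applies. The two ingredients of its criterion now degenerate: the annihilator of the zero bundle is the full restricted cotangent bundle, $B^{\circ}=T_N^{*}M$, while $TN+B=TN$. Thus $\sharp B^{\circ}\subset TN+B$ becomes $\sharp(T_N^{*}M)\subset TN$, i.e. the Poisson bivector $\Pi$ of $M$ is tangent to $N$ at every point of $N$. By definition this is exactly the statement that $N$ is a Poisson submanifold, which yields the desired equivalence.

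I do not expect any genuine obstacle: the mathematical content is carried entirely by Lemma \ref{condonMR} and Theorem \ref{th:MarRat}. The only points that need care are the identifications $B^{\circ}=T_N^{*}M$ and $TN+B=TN$ in the degenerate case, and the observation that $\sharp(T_N^{*}M)\subset TN$ is precisely the condition that the bivector restrict to $N$, i.e. the characterization of a Poisson submanifold.
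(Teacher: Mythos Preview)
Your proof is correct and follows the same route as the paper: both cases are handled by invoking Lemma \ref{condonMR} (for $B\neq 0$) and then feeding the resulting inclusion into the Marsden--Ratiu criterion of Theorem \ref{th:MarRat}. The paper's proof is terser in the $B=0$ case, simply saying one ``applies directly'' Theorem \ref{th:MarRat}, whereas you spell out the identifications $B^{\circ}=T_N^{*}M$ and $TN+B=TN$ and the resulting characterization of a Poisson submanifold; this is exactly the intended unpacking.
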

\begin{proof}
If $B\not=0$ then by Lemma \ref{condonMR} we have $\sharp B^{\circ}\subset TN$, which
by Thm. \ref{th:MarRat} implies Poisson reducibility. For the case $B=0$ we can apply directly
Thm. \ref{th:MarRat}.
\end{proof}

Further, the induced Poisson structure on $\un{N}$ depends only on $F=B\cap TN$ and not on $B$, as stated by the following proposition.

\begin{proposition}\label{pr:unique}
If $B, B' \subset T_NM$ are non-zero canonical subbundles such that $B\cap TN=B'\cap TN$,
then the induced Poisson structures on $\un{N}=\un{N'}$ agree.
\end{proposition}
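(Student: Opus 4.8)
The plan is to reduce the claim to a pointwise identity between Poisson brackets on $M$ and then to exploit the strong consequence of canonicity recorded in Lemma \ref{condonMR}. Since $B\cap TN=B'\cap TN=:F$, the two quotients coincide as manifolds, $\un{N}=N/F=\un{N'}$, and because $B$ and $B'$ are non-zero canonical subbundles, Theorem \ref{MR2} guarantees that both reductions exist. By Definition \ref{def:reducible} it then suffices to show, for arbitrary $f_1,f_2\in C^\infty(N)_F$ and arbitrary extensions $f_i^B\in C^\infty(M)_B$ and $f_i^{B'}\in C^\infty(M)_{B'}$, that
$$\iota^*\{f_1^B,f_2^B\}=\iota^*\{f_1^{B'},f_2^{B'}\}.$$
The essential input is that, by Lemma \ref{condonMR}, $\sharp B^\circ\subset TN$ and $\sharp (B')^\circ\subset TN$; hence at every $p\in N$ the Hamiltonian vector field $\sharp\dd g|_p$ of a $B$-invariant (resp. $B'$-invariant) function $g$ is tangent to $N$. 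Throughout I write $\{g,h\}(p)=\langle \dd h,\sharp\dd g\rangle_p$ for the natural pairing, together with antisymmetry $\langle \dd h,\sharp\dd g\rangle=-\langle \dd g,\sharp\dd h\rangle$.

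The key step is to pass from the $B$-extensions to the $B'$-extensions \emph{one factor at a time}, rather than all at once. Fix $p\in N$ and write $\{f_1^B,f_2^B\}(p)=\langle \dd f_2^B,\sharp\dd f_1^B\rangle_p$. Since $\sharp\dd f_1^B|_p\in T_pN$ and the two extensions $f_2^B,f_2^{B'}$ of $f_2$ have the same restriction $\iota^*\dd f_2^B=\iota^*\dd f_2^{B'}=\dd f_2$ to $TN$, a tangent vector in $T_pN$ cannot detect their difference, so $\langle \dd f_2^B,\sharp\dd f_1^B\rangle_p=\langle \dd f_2^{B'},\sharp\dd f_1^B\rangle_p$. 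I then use antisymmetry to rewrite this as $-\langle \dd f_1^B,\sharp\dd f_2^{B'}\rangle_p$ and repeat with the roles reversed: because $\sharp\dd f_2^{B'}|_p\in T_pN$ and $f_1^B,f_1^{B'}$ both restrict to $\dd f_1$ on $TN$, I may replace $\dd f_1^B$ by $\dd f_1^{B'}$, arriving at $-\langle \dd f_1^{B'},\sharp\dd f_2^{B'}\rangle_p=\{f_1^{B'},f_2^{B'}\}(p)$. This chain of equalities is exactly the desired identity, and it holds for every choice of extensions, which incidentally re-proves that each reduced bracket is independent of its own extensions.

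The point I expect to be the main obstacle is precisely understanding why this one-at-a-time replacement is forced. The naive approach would compare the two brackets through the single difference $a:=\dd f_1^B|_p-\dd f_1^{B'}|_p$, which lies in $(T_pN)^{\circ}$ since both extensions restrict to $\dd f_1$; one would then like to conclude $\sharp a\in F_p$ from the extension-independence built into reducibility. This route stalls, however, because $a$ lies in neither $B^{\circ}$ nor $(B')^{\circ}$, so the well-definedness of the reduced bracket for a \emph{single} subbundle simply does not apply to it. The remedy above sidesteps the difficulty: at each replacement exactly one slot of the bivector carries a Hamiltonian vector field, which is tangent to $N$ by Lemma \ref{condonMR}, while the other slot carries a covector whose ambiguity lives in $(T_pN)^{\circ}$, and a vector tangent to $N$ pairs trivially against the ambiguous part of such a covector. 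It is this interplay — tangency of $\sharp B^{\circ}$ and $\sharp(B')^{\circ}$ to $N$, bought by non-vanishing canonicity — that makes the two reduced structures agree.
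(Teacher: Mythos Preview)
Your argument is correct and essentially identical to the paper's: the paper writes the difference as the telescoping sum $\iota^*\{f_1^B,f_2^B\}-\iota^*\{f_1^{B'},f_2^{B'}\}=\iota^*\{f_1^B-f_1^{B'},f_2^B\}+\iota^*\{f_1^{B'},f_2^B-f_2^{B'}\}$ and kills each term using $d(f_i^B-f_i^{B'})|_p\in T_pN^{\circ}$ together with $\sharp B^{\circ},\sharp(B')^{\circ}\subset TN$ from Lemma \ref{condonMR}. Your one-slot-at-a-time replacement is exactly this telescoping, just narrated more explicitly.
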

\begin{proof}
By Lemma \ref{condonMR} we know that $\sharp B^{\circ},\sharp B'^{\circ}\subset TN$.
Let $f_1,f_2\in C^\infty(M)_B$ and
$f_1', f_2'\in C^\infty(M)_{B'}$ s. t.
$\iota^*f_i=\iota^*f'_i$, $i=1,2$.
We have
$$\iota^*\{f_1,f_2\}-\iota^*\{f_1',f_2'\}
=\iota^*\{f_1-f_1',f_2\}+\iota^*\{f_1',f_2-f_2'\},$$
and as for any $p\in N$ we have $d(f_i-f_i')_p\in TN^\circ$ and
$\sharp(df_i)_p\in TN$, the right hand side vanishes. Hence
$$\iota^*\{f_1,f_2\}=\iota^*\{f_1',f_2'\}.$$
\end{proof}

\begin{remark} To every submanifold $N$ of the Poisson manifold $M$
  is   canonically associated a Poisson algebra, as
follows\footnote{This is an algebraic version of Example $D$ in
 \cite{MarRat}; the latter holds when $\sharp TN^{\circ}$ and $\sharp TN^{\circ}\cap TN$
  have constant rank.}. Let  $\cI$ be the ideal of functions on
  $M$ vanishing on $N$. Its Poisson normalizer $\cN\equiv\{f\in C^\infty(M)\mid
\{f,\cI\}\subset \cI\}$ is a Poisson subalgebra, so the
quotient $\cN/(\cN\cap \cI)$ is a Poisson algebra (see also \cite{glmv}).
Notice that $\cN$ consists of functions whose differentials
annihilates all vectors in  $\sharp TN^{\circ}$.

Now let $B$ be a nonzero canonical subbundle. Then
$C^{\infty}(\un{N})$, with the Poisson bracket induced as in Thm.
\ref{MR2}, is a \emph{Poisson subalgebra} of  $\cN/(\cN \cap \cI)$. Indeed by Lemma \ref{condonMR} we have $B \supset
\sharp TN^{\circ}$, so $C^{\infty}(M)_B\subset \cN$, hence
$C^{\infty}(\un{N})=C^{\infty}(M)_B/(C^{\infty}(M)_B\cap \cI)$
sits inside $\cN/(\cN\cap \cI)$ and is a Poisson
subalgebra. Notice that $\cN/(\cN\cap \cI)$ does not
``see'' the subbundle $B$, in agreement with Prop. \ref{pr:unique}
above. \end{remark}


\begin{remark} We complete 
Prop. \ref{pr:unique} by dealing with  the trivial case $B=0$ (which is clearly canonical). As we saw above,
$(M,\{\cdot,\cdot\}, {N}, B=0)$ is Poisson reducible iff $N$ is a Poisson submanifold.
If $B'$ is some canonical subbundle with $B'\cap TN=0$ then the Poisson structures induced by $B'$ and $B=0$ on $N$ agree, as $N$ is a Poisson submanifold.
\end{remark}

The conclusion of Prop. \ref{pr:unique} is that, when the Marsden-Ratiu reduction endows
 $\un{N}$ with an
induced Poisson structure, this structure depends only on $F$.
This result is against the original idea of reduction by distributions,
where the role played by $B$ is expected to be more prominent.
In order to accomplish this objective we will proceed, in the coming section,
to relax the condition of canonicity of the distribution while maintaining
the requirement of having a Poisson structure induced on  $\un{N}$.

\section{Extension of the Marsden-Ratiu reduction}\label{extMR}

The set-up of this section consists of  the geometric data of the
Mardsen-Ratiu theorem; we will set various conditions on these data
which guarantee Poisson reducibility.
So   let $(M,\Pi)$ be a Poisson manifold,
$N\subset M$ a submanifold and $B \subset T_NM$ a   subbundle
with $F:=B\cap TN$ a regular, integrable distribution.  Without loss of generality (see
Lemma \ref{surj}), here and in the rest of the paper, we 
 assume that the restriction map
$\iota^*: C^{\infty}(M)_B \rightarrow  C^{\infty}(N)_F$ is
surjective.
We do not need to assume that
$\un{N}:=N/F$
is a smooth manifold, even though this is of course the case of interest. In that case $C^{\infty}(\un{N})\cong
C^{\infty}(N)_F$.

We would like to  define a bilinear operation $\{\cdot,\cdot\}_{\un{N}}$ on $C^{\infty}(N)_F$ by the following rule:
 \begin{equation}\label{defbr}
\{f,g\}_{\un{N}}:=\iota^*\{f^B,g^B\}
\end{equation}
where  $f^B$, $g^B$ are arbitrary extensions to elements of
$C^{\infty}(M)_B$.  
 As the restriction map
$\iota^*: C^{\infty}(M)_B \rightarrow  C^{\infty}(N)_F$ is
surjective there is at most one bilinear
operation $\{\cdot,\cdot\}_{\un{N}}$. Our task is to determine when
$\{\cdot,\cdot\}_{\un{N}}$ is well-defined and when it is a Poisson
bracket.\\

The r.h.s. of eq. \eqref{defbr} is independent of the chosen extensions
(for all $f,g \in  C^{\infty}(N)_F$) iff
\begin{equation}\label{tnb}\sharp B^{\circ}\subset
TN+B \end{equation}
 (see the proof of the Marsden-Ratiu theorem or the proof of Thm.
\ref{gen} below). If this is the case, the r.h.s. of \eqref{defbr}
lies in  $C^{\infty}(N)_F$ iff for one choice of  extensions
$f^B,g^B$ we have $\iota^*\{f^B,g^B\} \in C^{\infty}(N)_F$, or
equivalently if
\begin{equation}\label{desc} \{ {C^{\infty}(M)_B},
{C^{\infty}(M)_B}\}\subset C^{\infty}(M)_F. \end{equation}

In this case clearly $\{\cdot,\cdot\}_{\un{N}}$ will be a
skew-symmetric operation on $C^{\infty}(N)_F$ which is a
biderivation w.r.t. the product; if $\un{N}$ is smooth, this means
that $\{\cdot,\cdot\}_{\un{N}}$ defines a bivector field on it.


Now we want to determine conditions under which
$\{\cdot,\cdot\}_{\un{N}}$ satisfies the Jacobi identity, for when this is the case
$(M,\{\cdot,\cdot\}, {N}, B)$ is Poisson reducible.
 Checking
the Jacobi identity suggests to require that  for any $f,g \in
C^{\infty}(N)_F$ there \emph{exist} extensions $f^B,g^B$ whose bracket
annihilate not only $F$ but actually a larger subbundle  (not necessarily tangent to $N$). This leads us to  a condition that
  involves two pieces of data: an additional subbundle $D$ of
$T_NM$ and a subspace $\cB$ of $C^{\infty}(M)_B$ which contains the above
extensions.
In the Appendix we give an algebraic interpretation of these data,
and at the end of Subsection \ref{appldis} we give a geometric
interpretation.

 \begin{theorem}\label{gen}
Let $(M,\{\cdot,\cdot\})$ be a Poisson manifold, $N \subset M$ a submanifold and
$B \subset T_NM$ a   subbundle with $F:=B\cap TN$ a regular,
integrable distribution.
Let $D$ be a subbundle of $T_NM$ satisfying\footnote{Equivalently
$D\subset B$ and $B\cap TN=D\cap TN$.} $F\subset D \subset B$ and
\begin{equation}\label{sha}
\sharp B^{\circ}\subset D+TN.
\end{equation}
 Let $\cB \subset C^{\infty}(M)_B$ be a
multiplicative subalgebra such that the restriction map $\iota^*:
\cB  \rightarrow  C^{\infty}(N)_F$
is surjective. Assume that
\begin{equation}\label{bra}
\{\cB ,\cB \}\subset
C^{\infty}(M)_D.\end{equation}

Then $(M,\{\cdot,\cdot\}, {N}, B)$ is Poisson reducible.
\end{theorem}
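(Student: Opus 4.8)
The plan is to unpack Poisson reducibility (Definition \ref{def:reducible}) into three tasks: showing that the tentative bracket \eqref{defbr} is independent of the chosen extensions, that it lands in $C^{\infty}(N)_F$ and is a skew-symmetric biderivation, and finally that it satisfies the Jacobi identity. Throughout I would evaluate brackets pointwise along $N$ via $\{h,k\}(p)=\langle \dd h_p,\sharp\,\dd k_p\rangle$, using repeatedly two elementary facts: a function vanishing on $N$ has differential in $TN^{\circ}$ along $N$, and a function in $C^{\infty}(M)_B$ has differential in $B^{\circ}$.

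First, for well-definedness, observe that since $D\subset B$, hypothesis \eqref{sha} implies the Marsden-Ratiu condition \eqref{tnb}, i.e. $\sharp B^{\circ}\subset TN+B$. If $f^B,\tilde f^B\in C^{\infty}(M)_B$ both restrict to $f\in C^{\infty}(N)_F$, their difference vanishes on $N$ and is $B$-invariant, so its differential along $N$ lies in $B^{\circ}\cap TN^{\circ}=(B+TN)^{\circ}$; pairing this covector with $\sharp\,\dd g^B\in\sharp B^{\circ}\subset B+TN$ yields zero, so \eqref{defbr} is insensitive to the extension of $f$, and by skew-symmetry also to that of $g$. Next, using surjectivity of $\iota^*\colon\cB\to C^{\infty}(N)_F$ I would always pick the extensions inside $\cB$; then \eqref{bra} gives $\{f^{\cB},g^{\cB}\}\in C^{\infty}(M)_D\subset C^{\infty}(M)_F$ (because $F\subset D$), so its restriction lies in $C^{\infty}(N)_F$, which is exactly the closure requirement \eqref{desc}. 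Skew-symmetry is inherited from $M$, and the Leibniz rule descends since a product of $\cB$-extensions is again a $\cB$-extension of the product (this is where the multiplicative structure of $\cB$ is used). Hence $\{\cdot,\cdot\}_{\un N}$ is a well-defined skew-symmetric biderivation.

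The hard part is the Jacobi identity, and the role of the auxiliary bundle $D$ with $\sharp B^{\circ}\subset D+TN$ is precisely to make it work. The key sublemma I would isolate is that the bracket can be computed using a merely $D$-invariant extension in the \emph{first} slot: if $\phi^D\in C^{\infty}(M)_D$ and $\psi^B\in C^{\infty}(M)_B$ restrict to $\phi,\psi\in C^{\infty}(N)_F$, then $\iota^*\{\phi^D,\psi^B\}=\{\phi,\psi\}_{\un N}$. Indeed, comparing $\phi^D$ with a genuine $B$-extension $\phi^B$, their difference vanishes on $N$ and is $D$-invariant, so its differential lies in $(D+TN)^{\circ}$, which annihilates $\sharp\,\dd\psi^B\in\sharp B^{\circ}\subset D+TN$; this is exactly where the full strength of \eqref{sha}, with $D$ rather than $B$, is exploited. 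Granting this, I apply it with $\phi^D=\{f^{\cB},g^{\cB}\}$, which by \eqref{bra} is a $D$-invariant extension of $\{f,g\}_{\un N}$, to get
$$\{\{f,g\}_{\un N},h\}_{\un N}=\iota^*\{\{f^{\cB},g^{\cB}\},h^{\cB}\},$$
together with the two cyclic analogues. Summing over the three cyclic permutations, the right-hand side is $\iota^*$ applied to the Jacobiator of $f^{\cB},g^{\cB},h^{\cB}$ on $M$, which vanishes by the Jacobi identity of the Poisson bracket on $M$. Therefore $\{\cdot,\cdot\}_{\un N}$ satisfies Jacobi and $(M,\{\cdot,\cdot\},N,B)$ is Poisson reducible. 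The single genuine obstacle is the sublemma of the last paragraph: it is the only place where the splitting provided by $D$ (as opposed to $B$ alone) is indispensable, and it is what allows the nested quotient bracket to be identified with a nested bracket of fixed representatives on $M$.
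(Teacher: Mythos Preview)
Your proof is correct and follows essentially the same route as the paper's: both establish well-definedness via $\sharp(TN+B)^{\circ}\subset B$, both deduce closure in $C^{\infty}(N)_F$ from \eqref{bra} by choosing extensions in $\cB$, and the key step for Jacobi---your ``sublemma'' that a $D$-invariant extension may be used in the first slot---is exactly the paper's observation that $\{f^B,g^B\}$ and $(\{f,g\}_{\un N})^B$ differ by a function whose differential along $N$ lies in $(D+TN)^{\circ}$, which \eqref{sha} then kills against $\sharp B^{\circ}$. The only cosmetic differences are that you package this step as a separate lemma and spell out the Leibniz rule (using multiplicativity of $\cB$), whereas the paper treats the biderivation property as clear from the discussion preceding the theorem.
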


\begin{proof}
Consider functions $f,g \in C^{\infty}(N)_F$ and extensions
$f^B,g^B$ in $\cB $. If we choose a different
extension ${f^B}'$ for $f$, the differential of  $f^B-{f^B}'$
annihilates $TN+B$, so because of $\sharp(TN+B)^{\circ}\subset
\sharp(D+TN)^{\circ}\subset B$ (eq. \eqref{sha}) we have $\iota^* \{f^B-{f^B}',g^B\}=0$.
Hence the expression for $\{f,g\}_{\un{N}}$ is independent of the
choice of extensions. By eq. \eqref{bra} it actually lies in
$C^{\infty}(N)_F$.

Now $\{f^B,g^B\}$ and $(\{f,g\}_{\un{N}})^B$ by definition agree on
$N$, and are elements respectively of $C^{\infty}(M)_D$ (by eq.
\eqref{bra}) and $\cB $. So their difference
annihilates $D+TN$ and by eq. \eqref{sha} the Poisson bracket of their difference with any
element of $\cB $ vanishes on $N$. This explains the second equality in the identity
$$\{\{f,g\}_{\un{N}},h\}_{\un{N}}=\iota^* \{(\{f,g\}_{\un{N}})^B,h^B\}=
\iota^* \{\{f^B,g^B\},h^B\}.$$ From this is clear that the Jacobi
identity for $\{\cdot,\cdot\}_{\un{N}}$ holds as a consequence of
that for $\{\cdot,\cdot\}$.
\end{proof}

\begin{remark} Enlarging $D$ makes the constraint  \eqref{bra} more
severe, so in applications one should choose $D$ satisfying
\eqref{sha} to have dimension as small as possible. In general there
is no  unique minimal choice of $D$.
\end{remark}

\section{Applications and examples} In this section we consider special cases
of Thm. \ref{gen}. As usual  $(M,\Pi)$ is a Poisson
manifold, $N\subset M$ a submanifold and $B \subset T_NM$ a
subbundle with $F:=B\cap TN$ a regular, integrable distribution.

\subsection{A straightforward application}
 Setting $D=F$ and
$\cB =C^{\infty}(M)_B$ in Thm. \ref{gen}. we
obtain a minor improvement of the Marsden-Ratiu theorem (Thm. \ref{th:MarRat}), where the
condition on the canonicity of $B$ is weakened:
\begin{proposition}\label{onn}
If
\begin{equation} \label{BBF} \{C^{\infty}(M)_B,C^{\infty}(M)_B\}\subset C^{\infty}(M)_F
\end{equation} and $\sharp B^{\circ}\subset TN$ then
$(M,\{\cdot,\cdot\}, {N}, B)$ is Poisson reducible.

\end{proposition}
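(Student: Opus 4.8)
The plan is to derive this statement directly from Theorem \ref{gen} by making the specific choices $D = F$ and $\cB = C^{\infty}(M)_B$; essentially all the work consists in checking that the hypotheses of that theorem reduce to the two assumptions stated here. No separate argument for the Jacobi identity is needed, since that is already handled inside the proof of Theorem \ref{gen}.

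First I would verify the conditions on $D$. With $D = F$ the chain $F \subset D \subset B$ holds trivially, because $F = B \cap TN \subset B$. For the containment \eqref{sha}, I observe that $F \subset TN$, so $D + TN = F + TN = TN$; hence the requirement $\sharp B^{\circ} \subset D + TN$ becomes exactly the second hypothesis $\sharp B^{\circ} \subset TN$ of the present proposition.

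Next I would check the conditions on $\cB$. Taking $\cB = C^{\infty}(M)_B$, this is automatically a multiplicative subalgebra, and the surjectivity of $\iota^*: \cB \rightarrow C^{\infty}(N)_F$ is precisely the standing assumption recorded after Lemma \ref{surj}. Finally, condition \eqref{bra}, namely $\{\cB, \cB\} \subset C^{\infty}(M)_D$, reads $\{C^{\infty}(M)_B, C^{\infty}(M)_B\} \subset C^{\infty}(M)_F$, which is the first hypothesis \eqref{BBF}.

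With all the hypotheses of Theorem \ref{gen} thus verified, Poisson reducibility of $(M, \{\cdot,\cdot\}, N, B)$ follows at once. I do not anticipate any genuine obstacle: the content of the proposition is entirely contained in Theorem \ref{gen}, and the only point requiring a moment's care is the identity $D + TN = TN$, which uses that $F = B \cap TN$ is tangent to $N$. Conceptually, the gain over the Marsden--Ratiu theorem is that the canonicity of $B$ (closure of $C^{\infty}(M)_B$ under the bracket, landing back in $C^{\infty}(M)_B$) is replaced by the weaker \eqref{BBF}, where the bracket need only land in the strictly larger space $C^{\infty}(M)_F \supset C^{\infty}(M)_B$.
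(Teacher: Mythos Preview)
Your proposal is correct and follows exactly the approach of the paper: the proposition is stated there as the specialization of Theorem \ref{gen} obtained by setting $D=F$ and $\cB=C^{\infty}(M)_B$, and you have spelled out in detail the verification that each hypothesis of Theorem \ref{gen} reduces to one of the two assumptions here.
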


\begin{remark}\label{remliederF}
In the above proposition condition \eqref{BBF} is equivalent to
the following, which is more suited for computations: locally
  there exists a frame of sections $X_i$ of $F$ and extensions thereof to
  vector fields on $M$ such that
 \begin{equation}\label{liederF} (\cL_{X_i} \Pi)|_N \subset B \wedge
 T_NM.\end{equation}
This can be shown  using formula \eqref{Xfg} below and  $\sharp
B^{\circ}\subset TN$.
\end{remark}

We present an example where the assumptions of Prop. \ref{BBF} are
satisfied but $B$ is not canonical.
\begin{example}\label{zxy}
Let $(M,\Pi)$ be $(\RR^3, z\frac{\partial}{\partial x}\wedge
\frac{\partial}{\partial y})$ and $N$ the plane given by $z=0$. Let
$B=\RR \frac{\partial}{\partial z}$. The conditions of Prop.
\ref{BBF} are satisfied because $\Pi$ vanishes at points of $N$ and
because $F=B\cap TN=\{0\}$. However $C^{\infty}(M)_B$ is not closed
w.r.t. the Poisson bracket: for instance $x,y$ lie in
$C^{\infty}(M)_B$ but $\{x,y\}=z$ does not.
\end{example}

If $\sharp B^{\circ}\subset TN$ then necessarily $\sharp
TN^{\circ}\cap TN \subset F$. When this last inclusion is an
equality eq. \eqref{BBF} holds automatically, so the interesting
case is when the inclusion is strict, as in the following example,
in which we use Remark \ref{remliederF}  to check condition
\eqref{BBF}.
\begin{example}
Let $(M,\Pi)=(\RR^6, \sum_i \frac{\partial}{\partial x_i}\wedge
\frac{\partial}{\partial y_i})$ and $N$ be the (coisotropic)
hyperplane $\{y_3=0\}$.  Let $B={\rm span}\{\frac{\partial}{\partial x_3},
\frac{\partial}{\partial x_1}+ \alpha\frac{\partial}{\partial
y_2}\}\subset TN$ where $\alpha \in C^{\infty}(N)$. $F=B$ is integrable iff
$\alpha$ is independent of $x_3$. We have $\sharp B^{\circ}\subset
TN$  since $B$ contains the characteristic distribution of $N$.

We check  condition \eqref{liederF}, which is easier than checking
directly  condition \eqref{BBF}. We have $\cL_{\frac{\partial}{\partial x_3}}\Pi=0$,
and $(\cL_{ \frac{\partial}{\partial x_1}+
\alpha\frac{\partial}{\partial y_2}}\Pi)|_N=-\sharp \dd\alpha\wedge
\frac{\partial}{\partial y_2}$ surely lies in $B \wedge T_NM$ if
$\alpha$ depends only on the coordinates $y_1$ and $x_2$. In this
case by Prop.  \ref{onn} the quotient $\un{N}\cong \RR^3$ has an
induced Poisson structure, which in suitable coordinates is given by
$\{y_1,y_2\}=\alpha$, $\{x_2,y_2\}=1$ and $\{y_1,x_2\}=0$.
\end{example}

\subsection{An application involving distributions}\label{appldis}

 If $M$ is endowed with a suitable distribution we can
weaken the condition $\sharp B^{\circ}\subset TN$ (which, as seen in
Lemma \ref{condonMR}, is an assumption of the Marsden-Ratiu theorem
for $B\not=0)$.

\begin{definition}\label{compat}
Let  $\theta_D$ be an integrable distribution  on $M$ such that
$F\subset {\theta_D}|_N \subset B$. We say that  $\theta_D$ and
$B$ are {\bf compatible} if  $\iota^*:C^{\infty}(M)_B\cap
C^{\infty}(M)_{\theta_D}\rightarrow C^{\infty}(N)_F$ is surjective.
\end{definition}

The above compatibility is satisfied for instance when ${\theta_D}|_N = B$ or
$F:=B\cap TN=\{0\}$.
In the appendix (Prop. \ref{equichar})
we shall give an equivalent characterization of
Def. \ref{compat}.

\begin{proposition}\label{thetaD}
Suppose that on $M$  there is an integrable distribution $\theta_D$
 such that $F\subset D:={\theta_D}|_N \subset B$ and so that
 $\theta_D$ is compatible with $B$.
 Assume that
 \begin{equation}\label{shathetaD}\sharp B^{\circ}\subset
 D+TN\end{equation} and that, for any section $X$ of $\theta_D$,
 \begin{equation}\label{lieder} (\cL_X \Pi)|_N \subset B \wedge T_NM.
 \end{equation}
 Then $(M,\{\cdot,\cdot\}, {N}, B)$ is Poisson reducible.
\end{proposition}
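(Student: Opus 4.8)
The plan is to deduce Proposition \ref{thetaD} as a special case of the general reduction theorem, Thm. \ref{gen}. The key realization is that the integrable distribution $\theta_D$ on $M$ furnishes exactly the two extra pieces of data that Thm. \ref{gen} requires: the subbundle $D:={\theta_D}|_N$ of $T_NM$, and a multiplicative subalgebra $\cB$ of $C^{\infty}(M)_B$. So first I would set $\cB:=C^{\infty}(M)_B\cap C^{\infty}(M)_{\theta_D}$, the functions that are both $B$-invariant and constant along the leaves of $\theta_D$. This is manifestly a multiplicative subalgebra, and by the compatibility hypothesis (Def. \ref{compat}) the restriction map $\iota^*:\cB\rightarrow C^{\infty}(N)_F$ is surjective, so the hypothesis on $\cB$ in Thm. \ref{gen} is met. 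The containment $F\subset D\subset B$ and the $\sharp$-condition \eqref{sha} are literally the hypotheses $F\subset D:={\theta_D}|_N\subset B$ and \eqref{shathetaD} of the present proposition.

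The one remaining hypothesis of Thm. \ref{gen} to verify is the bracket condition \eqref{bra}, namely $\{\cB,\cB\}\subset C^{\infty}(M)_D$. This is where the Lie-derivative assumption \eqref{lieder} enters, and it is the step I expect to be the main obstacle, since it is the only nonformal part. The strategy is to express $\dd\{f_1,f_2\}$ evaluated on a section $X$ of $\theta_D$ in terms of $\cL_X\Pi$. Using the standard identity for the derivative of a Poisson bracket along a vector field---presumably formula \eqref{Xfg} alluded to in Remark \ref{remliederF}---one writes, for $f_1,f_2\in\cB$ and $X$ a section of $\theta_D$,
$$
X\{f_1,f_2\}=(\cL_X\Pi)(\dd f_1,\dd f_2)+\{Xf_1,f_2\}+\{f_1,Xf_2\}.
$$
Since $f_1,f_2$ are $\theta_D$-invariant we have $Xf_1=Xf_2=0$, so the last two terms vanish, leaving $X\{f_1,f_2\}=(\cL_X\Pi)(\dd f_1,\dd f_2)$.

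At points of $N$, assumption \eqref{lieder} tells us $(\cL_X\Pi)|_N\in B\wedge T_NM$, so the bivector $(\cL_X\Pi)|_N$ has at least one leg lying in $B$. Because $f_1,f_2\in\cB\subset C^{\infty}(M)_B$ are $B$-invariant, their differentials annihilate $B$ along $N$, and hence $(\cL_X\Pi)|_N(\dd f_1,\dd f_2)=0$ at every point of $N$. Therefore $X\{f_1,f_2\}$ vanishes on $N$ for every section $X$ of $\theta_D$; equivalently $\dd\{f_1,f_2\}|_N$ annihilates $D={\theta_D}|_N$, which is precisely the statement $\{f_1,f_2\}\in C^{\infty}(M)_D$. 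This establishes \eqref{bra}, and with all hypotheses of Thm. \ref{gen} now in force we conclude that $(M,\{\cdot,\cdot\},{N},B)$ is Poisson reducible. The only subtlety to watch is that \eqref{lieder} is an identity along $N$ rather than on all of $M$, but since the conclusion we need---membership in $C^{\infty}(M)_D$---is itself a condition only along $N$ (the differential restricted to $N$ annihilating $D$), the on-$N$ version of the Lie-derivative hypothesis is exactly enough.
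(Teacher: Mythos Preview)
Your proposal is correct and follows essentially the same route as the paper: you apply Thm.~\ref{gen} with $\cB=C^{\infty}(M)_B\cap C^{\infty}(M)_{\theta_D}$ and $D=\theta_D|_N$, and you verify condition \eqref{bra} via the Lie-derivative identity $X\{f_1,f_2\}=(\cL_X\Pi)(\dd f_1,\dd f_2)+\{Xf_1,f_2\}+\{f_1,Xf_2\}$, which is exactly the formula \eqref{Xfg} used in the paper. Your write-up is in fact more explicit than the paper's in spelling out why the terms vanish and in noting that both \eqref{lieder} and membership in $C^{\infty}(M)_D$ are conditions along $N$ only.
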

\begin{proof} Set
$\cB =C^{\infty}(M)_B\cap C^{\infty}(M)_{\theta_D}$ in Thm.
\ref{gen}. By assumption  $\iota^*:\cB\rightarrow
C^{\infty}(N)_F$ is surjective.
 Condition \eqref{bra}  reads $$
\{C^{\infty}(M)_B \cap
C^{\infty}(M)_{\theta_D},
 C^{\infty}(M)_B \cap C^{\infty}(M)_{\theta_D}\}\subset
C^{\infty}(M)_D.$$ This is equivalent to \eqref{lieder}, as one can see
evaluating at points of $N$ the following equation: for $X \in
\Gamma(\theta_D)$ and $f,g\in C^{\infty}(M)_B \cap
C^{\infty}(M)_{\theta_D}$,
\begin{equation}\label{Xfg} X\{f,g\}=
(\cL_X
\Pi)(df,dg)+\Pi(d(Xf),dg)+\Pi(df,d(Xg)).\end{equation}
 \end{proof}

\begin{remark}\label{remlocal}
 It is sufficient to apply Prop. \ref{thetaD} locally.
More precisely: let $\{U^{\alpha}\}$ be an open cover
of a tubular neighborhood of $N$, and suppose that on each
$\{U^{\alpha}\}$
there exists an integrable distribution $\theta^{\alpha}_D$
as in the proposition. Then in particular eq. \eqref{tnb} is
satisfied, so eq. \eqref{defbr} determines a well-defined map
$C^{\infty}(N)_F\times  C^{\infty}(N)_F \rightarrow  C^{\infty}(N)$.
Applying Prop. \ref{thetaD} on each open set $U^{\alpha}$ ensures that this map
defines a Poisson bracket on $C^{\infty}(N)_F$.

Further it is sufficient to check condition \eqref{lieder}
locally on a frame $\{X_i\}$ of sections of $\theta_D$.
\end{remark}

To further illustrate the properties of the reduction discussed in
Prop. \ref{thetaD} we provide some concrete examples that highlight
different aspects of the reduction. The first examples are
particularly simple, since  there $B\oplus TN=T_NM$, so that
formula \eqref{defbr} defines a bivector field on $\un{N}=N$.
\begin{example}\label{poissonex}
Consider the symplectic manifold $(\RR^4, \sum_i dx_i\wedge dy_i)$,
 let $N$ be given by the constraints $x_1=x_2=0$ and let
$B={\theta_D}|_N$ where $\theta_D$ is the distribution on $\RR^4$
given by
  $\theta_D:=span\{\frac{\partial}{\partial x_1},
\frac{\partial}{\partial x_2}- \lambda\frac{\partial}{\partial
y_1}\}$ (with $\lambda \in \RR$).  $C^{\infty}(M)_{\theta_D}$ is
closed under the bracket and $\sharp (TN+B)^{\circ}=0$, so the
assumptions of
 Prop. \ref{thetaD} are met. The quotient $\un{N}$ is $\RR^2$ with
 natural coordinates $y_1,y_2$ and Poisson bivector
 $\lambda\frac{\partial}{\partial y_1}\wedge \frac{\partial}{\partial
 y_2}$.

 Notice that in this example the condition $\sharp B^{\circ}\subset
 TN$ is violated. The final Poisson structure depends on $B$
(while $B\cap TN=\{0\}$ is independent of $\lambda$). As shown in Prop.
\ref{pr:unique} this  can not happen in the Marsden-Ratiu reduction
(Thm. \ref{th:MarRat}).
\end{example}


The next example illustrates the fact that,
even if we have a well-defined smooth bivector on $\un{N}$,
we need extra conditions to satisfy the Jacobi identity.
\begin{example}\label{exbsb}
 Let $(M,\Pi)$ be the Poisson manifold
$(\RR^4, \sum_i \frac{\partial}{\partial x_i}\wedge
\frac{\partial}{\partial y_i})$, consider the hyperplane
$N=\{y_2=0\}$ and the subbundle $B$ of $T_NM$ spanned by
$\frac{\partial}{\partial
y_2}+ \alpha \frac{\partial}{\partial x_1}$,
 where $\alpha
\in C^{\infty}(N)$.
The bivector field induced by eq. \eqref{defbr} on $N$ is
$\frac{\partial}{\partial x_1}\wedge (\frac{\partial}{\partial
y_1}+\alpha \frac{\partial}{\partial x_2})$, hence it is Poisson iff
$\alpha$ is independent of $x_1$.

All the Poisson structures obtained above can be obtained using
Prop. \ref{thetaD}. Indeed, if we extend $B$ to the distribution
$\theta_D:=\RR ( \frac{\partial}{\partial y_2}+ \alpha
\frac{\partial}{\partial x_1})$, eq. \eqref{lieder} is satisfied iff
$\frac{\partial}{\partial x_1}\alpha=0$.
%
\end{example}

In the previous example we have seen an obstruction to obtaining a
Poisson structure after the reduction, namely eq. \eqref{lieder}. In
the following example the distribution $F$ on $N$ is non-trivial,
and we shall also exhibit an obstruction to have a well defined
bivector field on $\un N$ in the first place.

\begin{example}\label{exr6} Let $(M,\Pi)$ be the Poisson manifold
$(\RR^6, \sum_i \frac{\partial}{\partial x_i}\wedge
\frac{\partial}{\partial y_i})$, consider the hyperplane
$N=\{y_2=0\}$ and the subbundle of $T_NM$ given by
$B={\rm span}\{\frac{\partial}{\partial y_1}, \frac{\partial}{\partial
y_2}+ \alpha \frac{\partial}{\partial x_1}\}$, where
$\alpha\in C^{\infty}(N)$. Clearly $\sharp B^{\circ} \subset TN+B$, and
$F:=B \cap TN=\RR \frac{\partial}{\partial y_1}$.

Now the bracket of the $B$-invariant extensions
of the coordinate functions $x_1,x_2$ is:
$\{x_1^B, x_2^B\}\vert_N=\alpha$, which is well defined on $\un N$
iff $\alpha$ does not depend on $y_1$.  This condition ensures that we have
a bivector field on $\un N$ but still is not enough
to guarantee reducibility.

Prop. \ref{thetaD} can be applied to determine when the bracket
$\{\cdot, \cdot\}_{\un{N}}$ is a well-defined Poisson bracket. We
extend $B$ constantly in the $y_2$ direction to obtain the
distribution $\theta_D=span\{\frac{\partial}{\partial y_1},
\frac{\partial}{\partial y_2}+ \alpha\frac{\partial}{\partial
x_1}\}$ on $M$.  The distribution $\theta_D$ is integrable iff
$\alpha$ does not depend on $y_1$.  Now
$\cL_{\frac{\partial}{\partial y_1}}\Pi=0$, and
$\cL_{\frac{\partial}{\partial y_2}+ \alpha\frac{\partial}{\partial
x_1}}\Pi= -X_{\alpha}\wedge \frac{\partial}{\partial x_1} \subset
B\wedge T_NM$
iff $\alpha$ does not depend on the coordinates $x_3$ and $y_3$.
Hence Prop. \ref{thetaD} allows us to conclude that, when $\alpha$
depends only on the coordinates $x_1$ and $x_2$, we obtain a Poisson
bivector on $\un{N}\cong \RR^4$. In the natural coordinates, the
induced Poisson bivector is $\alpha \frac{\partial}{\partial
x_1}\wedge \frac{\partial}{\partial x_2} +\frac{\partial}{\partial
x_3}\wedge \frac{\partial}{\partial y_3}$.
\end{example}

The following is a simple example in which $M$ is a linear Poisson
manifold.
\begin{example}\label{dual}
Let $\g$ be a Lie algebra, $V\subset \g$ a subspace and $\h\subset
\g$ a Lie subalgebra satisfying $[\h, V\cap \h]\subset V$. We set
$M:=\g^*$, $N:=V^{\circ}$, and $B_x:=\h^{\circ}\subset T_xM$ at all
$x\in N$. Using  Lemma 5.4 of \cite{CZbis} and the assumptions,  we
see $\sharp B_x^{\circ}=\{ad_h^*(x): h\in \h\}\subset T_xN+B_x$ at
all $x\in N$. Extending $B=\h^{\circ}$ by translation to a
distribution $\theta_D$ on $M$ and noticing that the projection
$\g^*\rightarrow \g^*/\h^{\circ}\cong \h^*$ is a Poisson map we see
that eq. \eqref{lieder} is satisfied. By Prop. \ref{thetaD} we
conclude that there is an induced (linear) Poisson structure on
$\un{N}=\frac{V^{\circ}}{V^{\circ}\cap \h^{\circ}}\cong
(\frac{V+\h}{V})^*$. It corresponds to the Lie algebra structure on
$\frac{\h}{\h\cap V}$, which as a vector space is canonically
isomorphic to $\frac{V+\h}{V}$.
\end{example}

Our last example  shows that conditions of Prop. \ref{thetaD} are
not necessary
in order to obtain a Poisson structure after the reduction.
\begin{example}
 Let $(M,\Pi)$ be $(\RR^3, z\frac{\partial}{\partial x}\wedge
\frac{\partial}{\partial y})$,  $N$ the plane given by $z-x=0$ and
$B=\RR \frac{\partial}{\partial z}$. Formula \eqref{defbr} defines
the Poisson structure $\{x,y\}=x$ on $N$, however
  Prop.
\ref{thetaD} can not be applied   because a distribution $\theta_D$
as in the proposition does not exist. Indeed $\theta_D$ has to be
one-dimensional because of eq. \eqref{shathetaD}.
  For any vector field
$X$ which restricts to $\frac{\partial}{\partial z}$ on $N$, we have
$(\cL_X \Pi)|_p=X(z)|_p \cdot \frac{\partial}{\partial x}\wedge
\frac{\partial}{\partial y}=\frac{\partial}{\partial x}\wedge
\frac{\partial}{\partial y}$ at any point   $p\in N$ of the form
$(0,y,0)$, so eq. \eqref{lieder} is not satisfied.
\end{example}


We conclude the subsection giving a geometric interpretation
of Prop. \ref{thetaD}. Assume
that the quotient $\un{M}:=M/\theta_D$ is smooth  and that
$C^{\infty}(M)_{\theta_D}$ is closed under the Poisson bracket, so
that $\un{M}$ has a Poisson structure  for which the projection $M
\rightarrow \un{M}$ is a Poisson map.  Assume $\un{N} \subset
\un{M}$ is a { Poisson-Dirac submanifold} \cite{CrF}, so that it has
an induced Poisson structure. Then the Poisson bracket of functions
on $\un{N}$ is computed by lifting to functions in
$C^{\infty}(M)_{B}$ where $B$ is  a subbundle   as in Prop.
\ref{thetaD}. In this interpretation the case $D=B$ corresponds to
the case where $\un{N}$ is actually a
\emph{Poisson} submanifold of $\un{M}$.\\


\subsection{An application to hamiltonian actions}

Here is an instance where the assumptions of Prop. \ref{thetaD} are
naturally met. Given an action of a Lie group on a manifold $M$ we
denote by $\g_M(p)$ the span at $p\in M$ of the vector fields
generating the action (i.e. the tangent space of the $G$-orbit
through $p$).
\begin{proposition}\label{action}
Let the Lie group $G$ act on the symplectic manifold $(M,\omega)$ so
that $\g_M$ has constant rank and  with equivariant moment map
$J:M\rightarrow \g^*$. Let $m\in J^{-1}(0)$ and $N$ be a submanifold through 
$m$ so that
\begin{equation}\label{dirsum}
T_mN\oplus ker(d_mJ)=T_mM.
\end{equation}
Then $N$,  after shrinking it to a smaller neighborhood of $m$ if
necessary, has an induced Poisson structure, obtained extending
functions from $N$ to $M$ so that they annihilate
$[\g_M+(TN+\g_M)^{\omega}]|_N$
\end{proposition}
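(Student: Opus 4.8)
The plan is to apply Proposition \ref{thetaD}, taking as auxiliary foliation the orbit distribution itself, $\theta_D:=\g_M$, and $D:=\g_M|_N$, after checking that near $m$ the subbundle $B:=[\g_M+(TN+\g_M)^\omega]|_N$ is complementary to $TN$. First I would record the relevant symplectic linear algebra. Since $J$ is a moment map, at every $p$ one has $\ker d_pJ=\g_M(p)^\omega$, and since $M$ is symplectic $\sharp$ is the inverse of $v\mapsto\iota_v\omega$, so $\sharp(W^\circ)=W^\omega$ for every subspace $W$. Moreover equivariance gives, for $\xi,\eta\in\g$ and $J_\xi:=\langle J,\xi\rangle$, that $\omega(\xi_M,\eta_M)(m)=\{J_\xi,J_\eta\}(m)=\langle J(m),[\xi,\eta]\rangle=0$, so the orbit direction $\g_M(m)$ is \emph{isotropic}; this is where $m\in J^{-1}(0)$ enters.

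Next I would control dimensions after shrinking $N$. The rank of $\ker d_pJ$ is constant (it equals $\dim M-\dim\g_M(p)$, and $\g_M$ has constant rank by hypothesis), so the transversality \eqref{dirsum} is an open condition and persists for all $p\in N$ once $N$ is shrunk; in particular $\dim T_pN=\dim\g_M(p)=:k$. Writing $V=T_pM$, $T=T_pN$, $G=\g_M(p)$, $K=\ker d_pJ=G^\omega$, the transversality $T\cap K=0$ makes the pairing $T\times G\to\RR$, $(v,w)\mapsto\omega(v,w)$, nondegenerate (equal finite dimensions plus trivial left kernel), whence $G\cap T^\omega=0$. The sum $B_p=G+(T+G)^\omega$ is then direct, giving $\dim B_p=\dim M-k+\dim(G\cap T)$. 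At $m$ isotropy forces $G\cap T\subset K\cap T=0$, so by upper semicontinuity $\g_M(p)\cap T_pN=0$ for $p$ near $m$; hence $B$ has the constant rank $\dim M-k$ and $\dim B_p+\dim T_pN=\dim M$. Semicontinuity of $\dim(B\cap TN)$ from its value $0$ at $m$ then yields $B\oplus TN=T_NM$ on a neighborhood, so that $F:=B\cap TN=0$ and $\un N=N$.

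It then remains to verify the hypotheses of Proposition \ref{thetaD} for $\theta_D=\g_M$, $D=\g_M|_N$. The inclusions $F=0\subset D\subset B$ are immediate, and $\g_M$ is integrable of constant rank (its leaves are the $G$-orbits). For \eqref{shathetaD} I use $\sharp B^\circ=B^\omega=G^\omega\cap(T+G)=\ker dJ\cap(TN+\g_M)\subset TN+\g_M=TN+D$. Compatibility of $\theta_D$ with $B$ is automatic because $F=0$ (the remark following Definition \ref{compat}). Finally \eqref{lieder} holds because the action admits a moment map and is therefore symplectic: Cartan's formula gives $\cL_{\xi_M}\omega=d\,\iota_{\xi_M}\omega=d(dJ_\xi)=0$, hence $\cL_{\xi_M}\Pi=0$ for every fundamental vector field $\xi_M$. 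Since the $\xi_M$ locally frame $\g_M$, Remark \ref{remlocal} allows me to test \eqref{lieder} on them, where the left-hand side even vanishes. Proposition \ref{thetaD} then gives Poisson reducibility, i.e.\ the bracket \eqref{defbr} induced on $C^\infty(N)=C^\infty(\un N)$ by $B$-invariant extensions is a Poisson bracket.

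I expect the main obstacle to be the shrinking step: showing that transversality and, above all, $F=B\cap TN=0$ persist on a neighborhood of $m$. This cannot be done by a pointwise identity, since away from $J^{-1}(0)$ the orbit $\g_M(p)$ ceases to be isotropic; it must be argued by semicontinuity starting from the values at $m$, and it is precisely here that the two hypotheses $m\in J^{-1}(0)$ (isotropy) and constant rank of $\g_M$ (so that $\ker dJ$ has constant rank and the relevant bundles exist) are indispensable.
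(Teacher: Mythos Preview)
Your proposal is correct and follows essentially the same route as the paper: apply Proposition~\ref{thetaD} with $\theta_D=\g_M$ and $D=\g_M|_N$, use equivariance at $m\in J^{-1}(0)$ to get isotropy $\g_M(m)\subset\g_M(m)^\omega=\ker d_mJ$, and use the resulting transversality to show that $B$ has constant rank and $F=B\cap TN=0$ near $m$. The only cosmetic difference is in how the constant rank of $B$ is obtained: the paper argues that $TN+\g_M$ is a \emph{symplectic} subbundle near $m$ (so its $\omega$-orthogonal has constant rank and meets $\g_M$ trivially), whereas you run an explicit dimension count $\dim B_p=\dim M-k+\dim(\g_M(p)\cap T_pN)$ together with semicontinuity of $\dim(\g_M\cap TN)$; both are equivalent. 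One line you could make explicit is why $B_m\cap T_mN=0$ before invoking semicontinuity: since $\g_M(m)$ is isotropic, $B_m=\g_M(m)+(T_mN+\g_M(m))^\omega\subset\g_M(m)^\omega=\ker d_mJ$, hence $B_m\cap T_mN\subset\ker d_mJ\cap T_mN=0$ by \eqref{dirsum}.
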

\begin{proof} Consider $B:=[\g_M+(TN+\g_M)^{\omega}]|_N\subset T_NM$ and the
distribution $\theta_D:=\g_M$. We now check that the assumptions of
Prop. \ref{thetaD} are automatically satisfied; we will make use
repeatedly of $\g_M(m)\subset ker(d_mJ)=\g_M(m)^\omega$, which
holds by the equivariance of $J$.

First of all $B$ has constant rank, at least near $m$. Indeed the
sum of $TN$ and $\g_M$ has constant rank because their intersection
at $m$ is trivial. Further
 $TN+\g_M$ is a symplectic subbundle of $T_NM$.
 To this aim we check that at the point $m$ we have
\begin{equation}
\begin{aligned}
T_mN^{\omega}\cap \left[\g_M(m)^{\omega}\cap (T_mN+\g_M(m))\right]&=
T_mN^{\omega}\cap
\g_M(m)=\hfill\cr&=\left(T_mN+\g_M(m)^\omega\right)^\omega=\{0\}.
\end{aligned}
\end{equation}
We conclude that $B=\g_M\oplus (TN\oplus \g_M)^{\omega}$ has
constant rank near $m$. Further we have $F=B\cap TN=\{0\}$ since
$B_m\subset ker(d_mJ)$.

Compatibility of $\theta_D$ and $B$ holds because $F=\{0\}$.
Condition \eqref{shathetaD} as well as $D:={\theta_D}|_N
\subset B$ are trivially satisfied. Condition \eqref{lieder} is
satisfied since the $G$-action preserves $\omega$.
\end{proof}

\begin{remark}\label{remaction}
1) The geometric interpretation of Prop. \ref{thetaD} applied to the
special case of  Prop. \ref{action} is the following: if the $G$
action is free and proper it is known that $M/G$ is a Poisson
manifold, whose symplectic leaves  are given by $J^{-1}(\cO)/G$ as
$\cO\subset \g^*$ ranges over all coadjoint orbits. Therefore
$\un{N}\cong N$ is a submanifold of $M/G$ which intersects
transversely the symplectic leaf $J^{-1}(0)/G$, and has such it has
a Poisson structure induced from $M/G$. This Poisson structure agree
with the one that Prop. \ref{action} induces on $N$.

2) In  the case that the $G$-action in  Prop.
\ref{action}  is free and proper
one has a dual pair $M/G\leftarrow M \rightarrow \g^*$, and Thm 8.1
of \cite{AW} says that the Poisson structure on $\un{N}$ (as in part 1)
above) is isomorphic up to sign to the one on the open subset $J(N)$
of  $\g^*$. However the   identification  $\un{N}\cong N \cong J(N)$
given by the dual pair does not preserve the Poisson structures in
general. (A sufficient condition is that $N$ is isotropic.)
\end{remark}

The following is an example for Prop. \ref{action}.
\begin{example}
Consider the action of $G=U(2)$ on $M=GL(2,\CC)$ by left multiplication, and endow $M$ with the symplectic form induced by the natural embedding in $\CC^4$. This action is Hamiltonian with moment map $J:GL(2,\CC)\rightarrow \uu^*(2)\cong \uu(2)$ given by $J(A)=\frac{1}{2i}(AA^*-I)$ \cite{Ca}.
A slice transverse to $J^{-1}(0)$ at the identity is given by
$$N :=\left\{ \left(
 \begin{smallmatrix} {x_1} & x_2+ix_3 \\ 0 & x_4 \end{smallmatrix}
\right) \right \}$$ where $x_1,x_4$ are real numbers close to $1$
and $x_2,x_3$ are close to $0$. A straightforward computation shows
that extending the coordinates $x_i$ on $N$ so that they annihilate
$[\g_M+(TN+\g_M)^{\omega}]|_N= [\g_M]|_N$ delivers the following
bracket on $N$:

\begin{eqnarray*}
 \{x_1,x_2\}=\frac{x_3}{x_1},& \{x_1,x_3\}=-\frac{x_2}{x_1},&
\{x_1,x_4\}=0\\
 \{x_2,x_3\}=1-\frac{x_4^2}{x_1^2},&
 \{x_2,x_4\}=\frac{x_3x_4}{x_1^2},&
\{x_3,x_4\}=-\frac{x_2x_4}{x_1^2}.
\end{eqnarray*}
Prop. \ref{action} states that this is a Poisson bracket.

In the new coordinates $\xi_1=\frac12x_1x_2, \xi_2=\frac12x_1x_3,
\xi_3=\frac14(x_1^2-x_2^2-x_3^2-x_4^2),
\eta=\frac14(x_1^2+x_2^2+x_3^2+x_4^2)$ the Poisson bracket is linear
and coincides  with that of $\uu^*(2)$, in agreement with Remark
\ref{remaction}.
\end{example}

\subsection{The symplectic case}

We end this section  asking when eq. \eqref{defbr} defines a \emph{symplectic} structure on the quotient $\un{N}$. We focus on the case where $M$ has a symplectic (not just Poisson) structure $\omega$.

\begin{lemma}\label{sympl}
Assume that eq. \eqref{defbr} endows $\un{N}$ with a well-defined bivector field $\un{\Pi}$.  $\un{\Pi}$ corresponds to a non-degenerate 2-form   iff
\begin{equation}\label{BBperp}
TN+B=B^{\omega}+B.
\end{equation}
In this case the 2-form on $\un{N}$ is obtained pushing down  $\omega^B\in \Omega^2(N)$ given by $\omega^B(X_1,X_2)
=\omega(X_1+b_1,X_2+b_2)$, where $b_i\in B$ are such that $X_i+b_i\in  B^{\omega}$.
\end{lemma}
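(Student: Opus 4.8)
The plan is to understand the bivector $\un{\Pi}$ concretely and then compute when it is nondegenerate. Since eq. \eqref{defbr} defines $\un{\Pi}$ by extending functions to be $B$-invariant and taking brackets, the first step is to describe the sharp map of $\un{\Pi}$ at a point $\un{p}\in\un{N}$. For $f\in C^{\infty}(N)_F$ with a $B$-invariant extension $f^B$, the differential $(\dd f^B)_p$ annihilates $B$, i.e. lies in $B^{\circ}$, and the Hamiltonian vector field $\sharp(\dd f^B)_p=\omega^{-1}(\dd f^B)_p$ lies in $B^{\omega}$ by definition of the symplectic orthogonal. So the Hamiltonian vectors that compute $\un{\Pi}$ live in $B^{\omega}$, read modulo $F$ (equivalently projected to $\un{N}$). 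Nondegeneracy of $\un{\Pi}$ should then translate into a condition comparing $B^{\omega}$ with $TN$ inside $T_NM$.

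Concretely, I would argue that $\un{\Pi}_{\un{p}}$ is nondegenerate iff the image of its sharp map is all of $T_{\un{p}}\un{N}$, and trace through what that image is. The image consists of projections to $\un{N}$ of vectors of the form $\sharp(\xi)$ with $\xi\in B^{\circ}$; but such $\sharp(\xi)$ is exactly a vector in $B^{\omega}$, and to land in $T\un{N}$ (i.e. to give a well-defined tangent vector to the quotient) we already know from eq. \eqref{tnb} that $\sharp B^{\circ}\subset TN+B$, so these vectors lie in $(TN+B)\cap B^{\omega}$ modulo $F$. The surjectivity onto $T\un{N}=TN/F$ should then be equivalent, after a dimension/span count, to the equality $B^{\omega}+B=TN+B$ in \eqref{BBperp}. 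I expect the cleanest route is to show that the annihilator of the image of $\sharp_{\un{\Pi}}$ (as a subspace of $T^*\un{N}\cong F^{\circ}\cap B^{\circ}$ suitably interpreted) is zero precisely when \eqref{BBperp} holds, using the standard symplectic-orthogonal identities $(U\cap V)^{\omega}=U^{\omega}+V^{\omega}$ and $U^{\omega\omega}=U$.

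Once nondegeneracy is characterized, the second task is to identify the resulting $2$-form and verify the formula claimed in the lemma. Given $X_1,X_2\in TN$, I would choose $b_i\in B$ with $X_i+b_i\in B^{\omega}$ — this is possible exactly because \eqref{BBperp} guarantees $TN\subset B^{\omega}+B$ — and check that $\omega^B(X_1,X_2):=\omega(X_1+b_1,X_2+b_2)$ is well defined, i.e. independent of the choices of $b_i$. The independence follows because changing $b_i$ by an element of $B\cap B^{\omega}=(TN+B)^{\omega}$ (using \eqref{BBperp}) pairs trivially under $\omega$ with the other argument, which also lies in $B^{\omega}$. I would then check that $\omega^B$ is $F$-basic, i.e. that it descends to $\un{N}$: its contraction with a section $X$ of $F=B\cap TN$ vanishes because one may take $b=-X\in B$ so that $X+b=0\in B^{\omega}$, and its Lie derivative along $F$ vanishes since $F$ is tangent to the leaves along which $\omega^B$ is constant (this uses integrability of $F$). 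Finally I would confirm that the pushed-down $2$-form is inverse to $\un{\Pi}$ by pairing: for $f\in C^{\infty}(N)_F$ the Hamiltonian vector $\sharp(\dd f^B)\in B^{\omega}$ projects to $\sharp_{\un{\Pi}}(\dd f)$, and contracting $\omega^B$ with it recovers $\dd f$, matching the defining relation of $\un{\Pi}$ via \eqref{defbr}.

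The main obstacle I anticipate is the well-definedness and descent of $\omega^B$ rather than the nondegeneracy criterion itself: one must carefully track the several subspaces $B$, $B^{\omega}$, $TN$, $F=B\cap TN$ and $(TN+B)^{\omega}=B^{\omega}\cap TN^{\omega}$, and repeatedly invoke \eqref{BBperp} to move between $TN+B$ and $B^{\omega}+B$. The symplectic-orthogonal calculus is routine but error-prone, so the delicate point is showing that the ambiguity in the choice of $b_i$ lands precisely in the radical $B\cap B^{\omega}$ of the relevant pairing and hence does not affect $\omega^B$; establishing that this same ambiguity makes $\omega^B$ both skew-symmetric and $F$-basic is where the argument must be organized most carefully.
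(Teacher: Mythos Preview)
Your argument is correct and complete in outline, but it takes a different route from the paper. The paper proves this lemma via the Dirac-structure machinery set up in the appendix: by Prop.~\ref{pushpull}, the pullback $p^*L_{\un{N}}$ equals the almost Dirac structure $\iota^*(L_{\Pi}^B)$ on $N$, so $\un{\Pi}$ is invertible iff $\iota^*(L_{\Pi}^B)$ is the graph of a $2$-form with kernel $F$; writing out $\iota^*(L_{\Pi}^B)$ explicitly one sees this holds iff $TN\subset B^{\omega}+B$, which together with eq.~\eqref{tnb} gives \eqref{BBperp}, and the same explicit description identifies that $2$-form as $\omega^B$. Your approach bypasses Dirac structures entirely: you compute the image of $\sharp_{\un{\Pi}}$ directly as $((B^{\omega}+B)\cap TN)/F$, read off that surjectivity is $TN\subset B^{\omega}+B$, and then verify by hand that $\omega^B$ is well defined (ambiguity in $b_i$ lies in $B\cap B^{\omega}$, which pairs trivially with $B^{\omega}$), $F$-horizontal, and inverse to $\un{\Pi}$. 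Your route is more elementary and self-contained; the paper's route is shorter because it cashes in on the Dirac framework already built, and it gets the well-definedness and descent of $\omega^B$ for free as properties of pulled-back Dirac structures rather than as separate verifications. One small remark: you do not actually need to check $\cL_X\omega^B=0$ for $X\in\Gamma(F)$ separately, since once $\un{\Pi}$ is nondegenerate the $2$-form on $\un{N}$ exists as its inverse, and it suffices to show that $\omega^B$ agrees with the pullback of that inverse.
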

\begin{proof}
From Lemma \ref{pushpull} in the Appendix it follows that $\un{\Pi}$ is invertible iff the almost Dirac structure $\iota^*(L_{\Pi}^B)$ on $N$ is the graph of a 2-form with kernel $F$. Writing out explicitly $\iota^*(L_{\Pi}^B)$ one sees that it is the graph of a 2-form iff $TN\subset B^{\omega}+B$, which in turn is equivalent to eq. \eqref{BBperp} since   eq. \eqref{tnb} holds. In this case the kernel of the 2-form is automatically $F$. This shows the equivalence claimed in the lemma.

A  computation shows that $\iota^*(L_{\Pi}^B)$ is the graph of the 2-form $\omega^B$ defined above.
\end{proof}

A simple instance of Lemma \ref{sympl}  is the case when $N$ is a symplectic submanifold of $(M,\omega)$ and $B$ is small perturbation of $TN^{\omega}$.
Then $N$ is endowed with a
non-degenerate 2-form $\omega^B$, which is intertwined with
$\iota^*\omega$ by the bundle isomorphism $TN\cong B^{\omega}$ (given
by projection along $B$).

Suppose that  $B$ can be extended locally to an integrable distribution $\theta$ on $M$ so that the $\theta$-invariant functions are closed w.r.t. the Poisson bracket. Then $\omega^B$ is a closed form, for it is just the pullback to $N$ of the symplectic form on the quotient $M/\theta$ (this is an instance of Prop. \ref{thetaD}).
In general, writing $B$ as
the graph of a bundle map $A: TN^{\circ}\cong TN^{\omega} \rightarrow TN$,
it would be interesting to spell out in terms on $A$ when $\omega^B$ is a \emph{symplectic}
structure.

\appendix

\section{}
\subsection{Algebraic interpretations}

We  provide an algebraic interpretation of Thm. \ref{gen}.
\begin{proposition}\label{alg}
Let $\cM$ be a Poisson algebra, $\cB \subset \cD$ multiplicative
subalgebras of $\cM$ and $\cI$ a multiplicative ideal of $\cM$.
Assume that the images of $\cB$ and $\cD$ under the projection $\cM
\rightarrow \cM/\cI$ are equal   and that
\begin{equation}\label{shaalg}
    \{\cB,\cI\cap \cD\}\subset \cI
\end{equation}
and
\begin{equation}\label{braalg}
 \{\cB,\cB\}\subset \cD.
\end{equation}
Then there is an induced Poisson algebra structure on
$\frac{\cB}{\cB\cap \cI}$, whose bracket is determined by the
commutative diagram
\[
\xymatrix{
 \cB \times \cB \ar[d] \ar[r]^{\{\cdot,\cdot\}}& \cD \ar[d]
\\
\frac{\cB}{\cB\cap \cI} \times \frac{\cB}{\cB\cap \cI}  \ar[r]&
\frac{\cD}{\cD\cap \cI}=\frac{\cB}{\cB\cap \cI}\\
 }.
\]
\end{proposition}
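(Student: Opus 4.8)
The plan is to define the bracket on $\frac{\cB}{\cB\cap\cI}$ by the formula dictated by the diagram and then to check, in turn, that it is well defined, that it is a skew biderivation, and finally that it satisfies the Jacobi identity. First I would record that the projection $\cM\to\cM/\cI$ identifies $\frac{\cB}{\cB\cap\cI}$ and $\frac{\cD}{\cD\cap\cI}$ with the images of $\cB$ and $\cD$ in $\cM/\cI$ (here I use $\cB\subset\cD$ and that $\cI$ is an ideal, so $\cB\cap\cI$ and $\cD\cap\cI$ are ideals of $\cB$ and $\cD$). Since those images are assumed equal, this yields the identification $\frac{\cB}{\cB\cap\cI}=\frac{\cD}{\cD\cap\cI}$ appearing in the diagram. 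The diagram then prescribes $\{[f],[g]\}:=[\{f,g\}]$ for $f,g\in\cB$, where $\{f,g\}\in\cD$ by \eqref{braalg} and $[\,\cdot\,]$ denotes the class in $\frac{\cD}{\cD\cap\cI}=\frac{\cB}{\cB\cap\cI}$.

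For well-definedness, suppose $f,f'\in\cB$ have the same class, i.e. $f-f'\in\cB\cap\cI$; since $\cB\subset\cD$ we get $f-f'\in\cD\cap\cI=\cI\cap\cD$, so \eqref{shaalg} gives $\{g,f-f'\}\in\cI$ for $g\in\cB$, and by skew-symmetry $\{f,g\}-\{f',g\}=\{f-f',g\}\in\cI$. As this difference also lies in $\cD$ by \eqref{braalg}, it lies in $\cD\cap\cI$, whence $[\{f,g\}]=[\{f',g\}]$; skew-symmetry of $\{\cdot,\cdot\}$ then settles the second argument as well. Skew-symmetry of the reduced bracket is inherited directly from $\cM$. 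The Leibniz rule descends because $\cB$ is multiplicative (so $gh\in\cB$) and $\cD\to\frac{\cD}{\cD\cap\cI}$ is an algebra homomorphism: applying $[\,\cdot\,]$ to the identity $\{f,gh\}=\{f,g\}h+g\{f,h\}$ in $\cM$, whose right-hand side lies in $\cD$, gives the biderivation property on the quotient.

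The hard part will be the Jacobi identity, just as in Thm.~\ref{gen}, and the essential move is to pass between $\cD$-representatives and $\cB$-representatives. The difficulty is that $\{[f],[g]\}=[\{f,g\}]$ is a priori only represented by the element $\{f,g\}\in\cD$, whereas to iterate the bracket I need a representative in $\cB$. Using the equality of images I would choose $k\in\cB$ with $k-\{f,g\}\in\cI$; then $k-\{f,g\}\in\cD\cap\cI$, so \eqref{shaalg} applied with $h\in\cB$ gives $\{k-\{f,g\},h\}\in\cI$, whence $\{\{[f],[g]\},[h]\}=[\{k,h\}]=[\{\{f,g\},h\}]$ in $\cM/\cI$. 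This identifies the iterated reduced bracket with the class of the genuine double bracket computed in $\cM$. Summing over cyclic permutations of $f,g,h$ and invoking the Jacobi identity of $\cM$ then yields $[\{\{f,g\},h\}+\{\{g,h\},f\}+\{\{h,f\},g\}]=[0]=0$, which is the Jacobi identity for the reduced bracket. I expect this representative-swapping argument, licensed precisely by the equal-images hypothesis together with \eqref{shaalg}, to be the only genuinely delicate point; everything else is formal.
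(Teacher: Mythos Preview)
Your proposal is correct and follows essentially the same approach the paper indicates: the paper does not spell out the proof of Prop.~\ref{alg} but says it parallels that of Thm.~\ref{gen}, and your argument is precisely the algebraic translation of that proof. In particular, your ``representative-swapping'' step---choosing $k\in\cB$ with $k-\{f,g\}\in\cI\cap\cD$ and using \eqref{shaalg} to conclude $[\{k,h\}]=[\{\{f,g\},h\}]$ in $\cM/\cI$---is exactly the algebraic counterpart of the identity $\iota^*\{(\{f,g\}_{\un N})^B,h^B\}=\iota^*\{\{f^B,g^B\},h^B\}$ in the proof of Thm.~\ref{gen}, and your remark that both \eqref{shaalg} and \eqref{braalg} are needed again for Jacobi matches the paper's own comment.
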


Thm. \ref{gen} is recovered setting $\cM=C^{\infty}(M)$,
$\cD=C^{\infty}(M)_D$ and $\cI=\{f\in C^{\infty}(M): \iota^*f=0\}$.
Conditions  \eqref{shaalg} and \eqref{braalg} become conditions
\eqref{sha} and \eqref{bra}
 respectively.

The proof of Prop. \ref{alg} is similar to that of Prop. \ref{gen}
and will not be given here. We just mention that condition
\eqref{shaalg} can be interpreted as ``$\cI$ behaves like an ideal
in $\cB$'', and condition \eqref{braalg} as ``$\cB$ behaves like a
Poisson subalgebra'', showing that one has  a well-defined almost
Poisson bracket on $\frac{\cB}{\cB\cap \cI}$. To show that it
satisfies the Jacobi identity one needs to use
once more both conditions.\\

\subsection{Descriptions in terms of Dirac structures}\label{dirac}

In the next proposition we interpret in terms of Dirac
structures the operation $\{\cdot,\cdot\}_{\un{N}}$ given by eq.
\eqref{defbr}.
Let $(M,\Pi)$ be a Poisson manifold,
$N\subset M$ a submanifold and $B \subset T_NM$ a   subbundle with
$F:=B\cap TN$ a regular, integrable distribution.

\begin{proposition}\label{pushpull}
Assume  that $\un{N}:=N/F$ is   smooth and
 that the prescription \eqref{defbr} gives a well-defined
bivector field on $\un{N}$, and denote by $L_{\un{N}}$ its graph.
Then the pullback of the almost Dirac structure  $L_{\un{N}}$ under
$p: N \rightarrow \un{N}$ is $\iota^*(L_{\Pi}^B)$.

 Here $L_{\Pi}^B$
is the stretching \cite{CFZ} of $L_{\Pi}=graph(\Pi)$ in direction of
$B$, defined as $[L_{\Pi}|_N\cap (T_NM \oplus B^{\circ})]+(B\oplus
0)$.
  \end{proposition}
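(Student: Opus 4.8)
The plan is to prove Proposition \ref{pushpull} by unwinding the two definitions --- the pullback of $L_{\un{N}}$ under $p$ and the explicit stretching $L_{\Pi}^B$ --- and checking that the two almost Dirac structures on $N$ agree as subbundles of $TN\oplus T^*N$. Recall that the pullback of the graph $L_{\un{N}}$ of a bivector $\un{\Pi}$ under the submersion $p:N\to\un{N}$ consists of pairs $(X,p^*\alpha)$ with $X\in TN$ and $\alpha\in T^*\un{N}$ such that $p_*X=\sharp_{\un{\Pi}}\alpha$. I would first record this description, noting that $p^*\alpha$ ranges exactly over $F^{\circ}\subset T^*N$ (the conormals to the fibers of $p$) as $\alpha$ ranges over $T^*\un{N}$.

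Next I would compute $\iota^*(L_{\Pi}^B)$ directly from the formula $L_{\Pi}^B=[L_{\Pi}|_N\cap(T_NM\oplus B^{\circ})]+(B\oplus 0)$. The first summand picks out those $(v,\xi)$ with $v=\sharp\xi$, $v\in T_NM$, and $\xi\in B^{\circ}$; adding $(B\oplus 0)$ translates the $T_NM$-component by $B$. Pulling back along $\iota$ means intersecting the $T_NM$-component with $TN$ and then pushing the covectors forward via the restriction $T_N^*M\to T^*N$. I would carry this out to show that $\iota^*(L_{\Pi}^B)$ consists of pairs $(X,\iota^*\xi)$ with $X\in TN$, $\xi\in B^{\circ}$, and $\sharp\xi\in X+B$; here the condition $\sharp B^{\circ}\subset TN+B$ (eq.\ \eqref{tnb}, which holds by the standing hypothesis that \eqref{defbr} is well-defined) guarantees that such $X$ exists, so the projection to $TN$ is onto the expected distribution and the covector part $\iota^*\xi$ sweeps out $F^{\circ}$ as $\xi$ ranges over $B^{\circ}$.

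The core of the argument is then to match the two descriptions. I would take $\xi\in B^{\circ}$ and let $\alpha$ be the covector on $\un{N}$ with $p^*\alpha=\iota^*\xi$ (legitimate since $\iota^*\xi\in F^{\circ}$), and verify that the relation $\sharp\xi\in X+B$ corresponds precisely to $p_*X=\sharp_{\un{\Pi}}\alpha$. This is exactly the content of definition \eqref{defbr}: for $f,g\in C^{\infty}(N)_F$ with $B$-invariant extensions $f^B,g^B$, we have $\{f,g\}_{\un{N}}=\iota^*\{f^B,g^B\}=\iota^*(\sharp\, df^B)(g^B)$, and since $df^B|_N\in B^{\circ}$ restricts to $\iota^* df^B=p^*(df)$, the pairing computes $\sharp_{\un{\Pi}}(p^*df)$ modulo $B$ and modulo $F$. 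Tracking the $B$-ambiguity in $\sharp\xi$ carefully --- it is exactly killed upon pushing forward to $\un{N}$ because $B$ maps to the $p$-fiber directions --- shows the two relations coincide.

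The main obstacle I expect is bookkeeping the ambiguities coherently: the stretching introduces a $B$-worth of freedom in the tangent component while the restriction maps $T_N^*M\to T^*N\to F^{\circ}$ identify covectors only modulo $TN^{\circ}$ and $F^{\circ}$ respectively, so one must be careful that the map $B^{\circ}\to F^{\circ}$, $\xi\mapsto\iota^*\xi$, is surjective (which follows from $F=B\cap TN$) and that the fiber directions of $p$, namely $F$, are consistently the image of $B$ under $p_*$. Once these identifications are pinned down, the equivalence of the two defining relations is essentially a restatement of \eqref{defbr}, and no further calculation beyond linear algebra at each point is needed.
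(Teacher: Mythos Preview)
Your plan is correct and will work, but it takes a genuinely different route from the paper's proof. You argue by direct pointwise linear algebra: unwind $\iota^*(L_{\Pi}^B)$ as $\{(X,\iota^*\xi):X\in TN,\ \xi\in B^{\circ},\ \sharp\xi-X\in B\}$, unwind $p^*(L_{\un{N}})$ as $\{(X,p^*\alpha):p_*X=\sharp_{\un{\Pi}}\alpha\}$, and then match the two via the surjection $\iota^*:B^{\circ}\to F^{\circ}$ and the defining formula \eqref{defbr}. The paper instead argues indirectly through \emph{admissible functions}: it checks that both almost Dirac structures have kernel $F$ (hence share the admissible functions $C^{\infty}(N)_F$), then computes that the induced bracket on $C^{\infty}(N)_F$ is $\iota^*\{f^B,g^B\}$ in both cases, and concludes the subbundles coincide because a maximal isotropic subbundle with kernel $F$ is determined by the bracket it induces on $F$-invariant functions. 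Your approach is more self-contained---it does not appeal to this last general fact about Dirac structures---at the cost of the bookkeeping you already anticipate. One phrasing to tighten: ``$B$ maps to the $p$-fiber directions'' is not literally correct since $B\not\subset TN$; what you mean (and use) is that the ambiguity in the $TN$-component $X$ of a decomposition $\sharp\xi=X+b$ lies in $B\cap TN=F=\ker p_*$, so $p_*X$ is unambiguous.
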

\begin{proof}
We will show that the Poisson algebras of admissible functions for
$\iota^*(L_{\Pi}^B)$ and $p^*(L_{\un{N}})$ match, hence the
subbundles have to agree too. Short computations using
$\sharp(TN+B)^{\circ}\subset B$ (which holds since we assume that
eq. \eqref{defbr} gives a well-defined expression) show that
$\iota^*(L_{\Pi}^B)$ is a smooth almost Dirac structure on $N$ and
  that its kernel is exactly $F$. Hence its set of admissible
  functions is $ C^{\infty}(N)_F$.
  If $f,g \in C^{\infty}(N)_F$ their $\iota^*(L_{\Pi}^B)$-bracket is
  $\langle X_{f^B}+b, dg^B \rangle$ (where
  $f^B,g^B \in C^{\infty}(M)_B$ are extensions and $b\in \Gamma(B)$ is such
  that $X_{f^B}+b\in TN$), which is equal to $\{f^B,g^B\}$.

The kernel of $p^*(L_{\un{N}})$ is clearly also $F$, and if $f,g \in
C^{\infty}(N)_F$ their $p^*(L_{\un{N}})$-bracket is
$\{f,g\}_{\un{N}}$. Using eq. \eqref{defbr} this  concludes the
proof.
\end{proof}


\begin{remark}
The following statements complement Proposition \ref{pushpull} and are
proved similarly. Assume that  $\sharp(TN+B)^{\circ}\subset B$. Then
eq.  \eqref{defbr} defines a bivector field on $\un{N}$  iff
$\iota^*(L_{\Pi}^B)$ pushes forward under $p: N \rightarrow \un{N}$
(to the graph of eq. \eqref{defbr}).  If   $\iota^*(L_{\Pi}^B)$ is
integrable (i.e. if it is a Dirac structure on $N$) then it
automatically pushes forward, and therefore eq.  \eqref{defbr}
defines a Poisson structure on $\un{N}$.

Hence, assuming $\sharp(TN+B)^{\circ}\subset B$,  eq.  \eqref{defbr} defines a Poisson structure on $\un{N}$ iff $\iota^*(L_{\Pi}^B)$ is integrable. Unfortunately we were not able to express the integrability of the latter in simple
terms.
\end{remark}

\subsection{Compatibility with foliations}
We now address the question of compatibility
stated in Def. \ref{compat} and give an equivalent characterization.
By Remark \ref{remlocal} we can work locally, so in the following
we will assume that $\un{N}$ and $\un{M}:=M/\theta_D$ are smooth.

\begin{proposition}\label{equichar}
$\theta_D$ and $B$ as in Def. \ref{compat}
are compatible if and only if
\begin{eqnarray}\label{thetadescequiv}
&&\text{There exists a subbundle }
\hat{B} \text{ with }
B\subset\hat{B}\subset T_NM
\text { and }\hat{B}\cap TN=F\notag \\
&&\text{ such that }
pr: M  \rightarrow \un{M} \text{ maps }
\hat{B}\text{ to a well-defined subbundle of }T_{\un{N}}\un{M}.
\end{eqnarray}
\end{proposition}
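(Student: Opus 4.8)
The plan is to push everything down to the quotient $\un{M}:=M/\theta_D$ and to recognize the surjectivity in Definition \ref{compat} as a Marsden--Ratiu extension problem on the pair $(\un{M},\un{N})$, for which Lemma \ref{surj} is available. Throughout I identify $C^{\infty}(M)_{\theta_D}$ with $pr^*C^{\infty}(\un{M})$ and $C^{\infty}(N)_F$ with $C^{\infty}(\un{N})$ via $p:N\to\un{N}$; under these identifications $\iota^*$ becomes restriction $C^{\infty}(\un{M})\to C^{\infty}(\un{N})$, and $\mathcal{A}:=C^{\infty}(M)_B\cap C^{\infty}(M)_{\theta_D}$ corresponds to the $\bar f\in C^{\infty}(\un{M})$ whose differential annihilates $dpr(B_p)$ at every $p\in N$. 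The one linear-algebra fact I will reuse is that, since $\ker(dpr)|_N=D$ and $F=D\cap TN$, for any subbundle $\hat{B}$ with $F\subset D\subset\hat{B}$ and $\hat{B}\cap TN=F$ one has $dpr(\hat{B})\cap T\un{N}=dpr\big(\hat{B}\cap(TN+D)\big)=dpr(D)=0$, because $\hat b=t+d$ with $t\in TN,\ d\in D$ forces $t=\hat b-d\in\hat{B}\cap TN=F\subset D$.

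For the implication \eqref{thetadescequiv}$\Rightarrow$compatibility I would take $\hat{B}$ as given, set $\un{B}':=dpr(\hat{B})$, and use the fact above to see that $\un{B}'$ is a subbundle of $T_{\un{N}}\un{M}$ with $\un{B}'\cap T\un{N}=0$. Applying Lemma \ref{surj} to $(\un{M},\un{N},\un{B}')$, whose ``$F$'' is $0$, shows that every function on $\un{N}$ extends to some $\bar f\in C^{\infty}(\un{M})_{\un{B}'}$. Then $pr^*\bar f$ is $\theta_D$-invariant and, since $dpr(B)\subset\un{B}'$, also $B$-invariant, so $pr^*\bar f\in\mathcal{A}$ restricts on $N$ to the prescribed function. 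This gives surjectivity of $\iota^*|_{\mathcal{A}}$, i.e. compatibility.

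For the converse I would construct $\hat{B}$ explicitly, working locally as permitted by the standing assumption that $\un{M},\un{N}$ are smooth. Choose coordinates $(w_1,\dots,w_m,z_1,\dots,z_k)$ on $\un{M}$ with $\un{N}=\{z=0\}$. Compatibility lets me extend each $w_j|_{\un{N}}$ to some $f_j\in\mathcal{A}$; writing $f_j=pr^*\bar f_j$ gives $\bar f_j|_{\un{N}}=w_j|_{\un{N}}$, whence $d\bar f_j|_{\un{N}}=dw_j+\sum_i a_{ji}\,dz_i$, and these covectors are pointwise independent. I set $\un{B}'':=\bigcap_j\ker\big(d\bar f_j|_{\un{N}}\big)$, a rank-$k$ subbundle meeting $T\un{N}$ only in $0$, and define $\hat{B}:=(dpr)^{-1}(\un{B}'')$ over $N$. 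One then checks directly that $\hat{B}$ is a subbundle (of rank $k+\dim D$), that $dpr(\hat{B})=\un{B}''$ is well defined, that $B\subset\hat{B}$ (each $f_j\in C^{\infty}(M)_B$ forces $dpr(B)\subset\un{B}''$), and that $\hat{B}\cap TN=F$ (using $\un{B}''\cap T\un{N}=0$ and $F\subset D=\ker dpr$). This is precisely \eqref{thetadescequiv}.

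I expect the only real difficulty to lie in the converse, and specifically in the \emph{constant-rank} requirement. The most natural candidate, the maximal bundle $\bigcap_{f\in\mathcal{A}}\ker df_p$, does have a well-defined image under $dpr$, but that image need not have locally constant rank. The device of cutting down to the finitely many functions $\bar f_j$ extending a coordinate frame of $\un{N}$ is exactly what forces both $\hat{B}$ and $dpr(\hat{B})=\un{B}''$ to be genuine subbundles while still containing $B$; once the rank is pinned down, verifying $B\subset\hat{B}$ and $\hat{B}\cap TN=F$ is routine linear algebra.
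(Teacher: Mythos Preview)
Your proof is correct and follows essentially the same route as the paper's: for the ``if'' direction you project $\hat{B}$ to a subbundle of $T_{\un{N}}\un{M}$ transverse to $T\un{N}$ and extend functions there (the paper does the same, implicitly invoking Lemma~\ref{surj}); for the converse you extend a coordinate frame of $\un{N}$ to functions in $\mathcal{A}$ and define $\hat{B}$ as the preimage under $dpr$ of the common kernel of their differentials, exactly as the paper does. Your write-up is more detailed---in particular you spell out the linear-algebra behind $dpr(\hat{B})\cap T\un{N}=0$ and the constant-rank verification---but the argument is the same.
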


\begin{proof}
To show the ``if'' part notice that $pr_*{\hat{B}}$
intersects trivially $T\un{N}$ (since $F \subset D$), hence any
function on $\un{N}$ can be extended to an element of
$C^{\infty}(\un{M})_{pr_*\hat{B}}$, and the pullback under $pr$ is then an
element of $C^\infty(M)_B\cap C^\infty(M)_{\theta_D}$.
Conversely, if $\theta_D$ and $B$ are compatible,
we can extend a set of coordinates on $\un{N}$ to
functions  $x_i$ on $\un{M}$ so that $pr^* x_i \in
C^{\infty}(M)_B$, and
$\hat{B}:=pr_*^{-1}(\cap{\rm ker }\dd x_i)\subset T_NM$
will satisfy the condition above.
\end{proof}

In general it is not trivial to check whether the conditions of the previous
proposition are satisfied. One can however compute easily a sufficient
condition for the compatibility in the case one can take $\hat B=B$.

To state the result we introduce
$\tilde{\Gamma}(B):= \{X\in \Gamma(TM): X|_N\subset B\}$ and
$\Gamma'(\theta_D):=\Gamma(\theta_D)\cap \tilde{\Gamma}(F)$.
Then one can prove that (\ref{thetadescequiv}) holds with
$\hat B=B$ if and only if
\begin{equation}\label{thetadescsec}
[\Gamma'(\theta_D),\tilde{\Gamma}(B)]
\subset \tilde{\Gamma}(B),
\end{equation}
which implies the compatibility of $\theta_D$ and $B$.

We conclude remarking that, given a subbundle $D$ with $F\subset D\subset B$,
locally one can always find an extension of $D$ to an involutive
distribution $\theta_D$ compatible with $B$.


\providecommand{\bysame}{\leavevmode\hbox to3em{\hrulefill}\thinspace}
\providecommand{\MR}{\relax\ifhmode\unskip\space\fi MR }
\providecommand{\MRhref}[2]{%
  \href{http://www.ams.org/mathscinet-getitem?mr=#1}{#2}
}
\providecommand{\href}[2]{#2}

\end{document}